\title{State-based approach to the numerical solution of Dirichlet boundary 
              optimal control problems for the Laplace equation}
\author{Ulrich~Langer\footnote{Institute of Computational Mathematics,
    Johannes Kepler University Linz, 
    and Johann Radon Institute for Computational and Applied Mathematics,
    Austrian Academy of Sciences,
    Altenberger Stra{\ss}e 69, 4040 Linz,
    Austria, Email: ulanger@numa.uni-linz.ac.at},
  \;
  Richard~L\"oscher\footnote{Institut f\"{u}r Angewandte Mathematik,
    Technische Universit\"{a}t Graz, Steyrergasse 30, 8010 Graz, Austria,
    Email: loescher@math.tugraz.at}, 
  \; Olaf~Steinbach\footnote{Institut f\"{u}r Angewandte Mathematik,
    Technische Universit\"{a}t Graz, Steyrergasse 30, 8010 Graz, Austria,
    Email: o.steinbach@tugraz.at}, 
  \; Huidong~Yang\footnote{Faculty of Mathematics, University of Vienna,
  and 
  Doppler Laboratory for Mathematical Modeling and Simulation of Next
  Generations of Ultrasound Devices (MaMSi),
  Oskar--Morgenstern--Platz 1, A-1090 Wien, Austria, Email: huidong.yang@univie.ac.at}
}  
\date{}
\newcommand{\norm}[1]{\|#1\|}
\pgfplotsset{compat=1.17}
\newtheorem{theorem}{Theorem}
\newtheorem{lemma}{Lemma}
\newtheorem{remark}{Remark}
\numberwithin{equation}{section} 
\begin{document}

\maketitle

\begin{abstract}
We investigate the Dirichlet boundary control of the Laplace equation,
considering the control in $H^{1/2}(\partial \Omega)$, which is the natural
space for Dirichlet data when the state belongs to $H^1(\Omega)$. The cost
of the control is measured in the $H^{1/2}(\partial \Omega)$ norm that also
plays the role of the regularization term. We discuss regularization and
finite element error estimates enabling us to derive an optimal relation
between the finite element mesh size $h$ and the regularization parameter
$\varrho$, balancing the energy cost for the control and the accuracy of
the approximation of the desired state. This relationship is also crucial in
designing efficient solvers. We also discuss additional  box constraints
imposed on the control and the state. Our theoretical findings are
complemented by numerical examples, including one example with box constraints.
\end{abstract} 

\noindent
\begin{keywords}
  Dirichlet boundary control problems, Laplace equation,
  finite element discretization, error estimates, solution methods.
\end{keywords}

\medskip

\noindent
\begin{msc}
49J20, 
49K20,  
65K10, 
65N30, 
65N22  
\end{msc}
\section{Introduction}\label{Sec:Introduction}
We consider the boundary optimal control problem to minimize the cost
functional
\begin{equation}\label{Eqn:Functional abstract}
  {\mathcal{J}}(y_\varrho,u_\varrho) =
  \frac{1}{2} \, \| y_\varrho - \overline{y} \|^2_{L^2(\Omega)} +
  \frac{1}{2} \, \varrho \, \| u_\varrho \|^2_U
\end{equation}
subject to the Dirichlet boundary value problem for the Laplace equation,
\begin{equation}\label{Eqn:Laplace DBVP}
  - \Delta y_\varrho = 0 \quad \mbox{in} \; \Omega, \quad
  y_\varrho = u_\varrho \quad \mbox{on} \; \Gamma = \partial \Omega .
\end{equation}
Here, $\Omega \subset {\mathbb{R}}^d$, $d=2,3$, is a bounded domain with
Lipschitz boundary $\Gamma = \partial \Omega$,
$\overline{y} \in L^2(\Omega)$ is a given target, $\varrho \in (0,1]$ is
some regularization or cost parameter which the solution depends on, and
$U$ is the control space.

The mathematical analysis of abstract optimal control problems goes back
to the pioneering work of Lions \cite{Lions1971}, see also the more recent
text books \cite{HinzePinnauUlbrichUlbrich2009,Troeltzsch2010}.
Dirichlet boundary control problems play an important role in the context
of computational fluid mechanics, see, e.g.,
\cite{Fursikov1998,Gunzburger1992,HinzeKunisch2004}; for related work
for parabolic evolution equations, see, e.g.,
\cite{AradaRaymond2002,KunischVexler2007SISC,Lasiecka1978,Liang2025}.
In \cite{KunischVexler2007SISC}, the authors give an overview on
different approaches to handle Dirichlet boundary control problems
by considering different control spaces, including
$U=L^2(\Gamma)$, $U=H^{1/2}(\Gamma)$, and $U=H^1(\Gamma)$. The latter
was used to avoid difficulties with implementing the $H^{1/2}$-norm
which, at first glance, seems to be
more involved to realize than the $L^2$-norm. In fact, the most
standard choice in Dirichlet boundary control problems is to consider
$U=L^2(\Gamma)$ where the Dirichlet boundary value problem
\eqref{Eqn:Laplace DBVP} is formulated in a very weak sense
\cite{Berggren2004}, see, e.g.,
\cite{ApelMateosPfeffererRoesch2015SICO,Casas1993SICON,CasasRaymond2006SICO,
DeckelnickGuentherHinze2009SICO,MayRannacherVexler2013SICO}.
However, when considering the Dirichlet boundary value problem
\eqref{Eqn:Laplace DBVP}, $U = H^{1/2}(\Gamma)$ appears as a
natural choice \cite{BenBelgacem2003}. In 
\cite{OfPhanSteinbach2015NumerMath}, we have considered a finite element
formulation for the solution of Dirichlet boundary control problems when
using the regularization norm induced by the Steklov--Poincar\'e operator
$S : H^{1/2}(\Gamma) \to H^{-1/2}(\Gamma)$, see also
\cite{ChowdhuryGudiNandakumaran2017MathComp}. Since the Steklov--Poincar\'e
operator computes the normal derivative of the harmonic extension of a
given Dirichlet datum, this norm representation is equal to the 
Dirichlet norm as used in the variational formulation of Neumann
boundary value problems for the Laplacian. In this case, the abstract
theory as presented recently in
\cite{LangerLoescherSteinbachYang2025arXiv2505.19062}
is applicable, as we will see in this paper. In particular, if the target
$\overline{y}$ itself is harmonic, we can derive related regularization
error estimates which depend on the Sobolev regularity of the
target $\overline{y}$, and on the smoothness of the domain.
In the more general situation when the target $\overline{y}$ is not harmonic,
we determine the best approximation of the target in the space of harmonic
functions. For the finite element discretization of the gradient equation
in the space of harmonic functions, we apply a non-conforming mixed approach,
for which we prove related finite element error estimates and establish
optimal relations between the regularization parameter $\varrho$ and the
finite element mesh size $h$. This optimal choice also enables an
efficient iterative solution of the discrete system which is related to the
first biharmonic boundary value problem. Hence we can apply well known
preconditioning techniques, e.g.,
\cite{BraessPeisker:1986IMAJNA,GlowinskiPironneau:1979SIAMRev,
  Langer1986NumerMath,Peisker1988M2AN}. However, the relation of
Dirichlet boundary control problems with the first biharmonic boundary
value problem has some serious implications when considering domains
with a piecewise smooth boundary: In the case of the control space
$U=H^{1/2}(\Gamma)$, we have to adapt the optimal choice of the
regularization parameter $\varrho$ as a function of the finite element
mesh size $h$ by some additional logarithmic terms.
When using $U = L^2(\Gamma)$, we even have a reduced regularity of the
optimal control which, in the case of a two-dimensional domain with
piecewise smooth boundary, is zero in all corner points, see
\cite[Remark 1]{MayRannacherVexler2013SICO} and
\cite[Figure 3]{OfPhanSteinbach2015NumerMath}.

The rest of the paper is organized as follows. Section~\ref{Sec:BOCP}
starts with the setting of the Dirichlet boundary optimal 
control problem for the Laplace equation with energy regularization. 
Furthermore, we derive estimates of the deviation of the state $y_\varrho$ 
from the given desired state $\overline{y}$ in the $L^2$ norm as well as 
estimates of the energy cost of the control in terms of $\varrho$ 
and in dependence of the regularity of the desired state $\overline{y}$
and the computational domain $\Omega$. 
The finite element discretization together with the corresponding 
discretization error analysis is 
performed in Section~\ref{Sec:FED}. It turns out that the cost parameter $\varrho$
must be balanced with the mesh size $h$ in order to obtain asymptotically 
optimal estimates under minimal energy costs.
Section~\ref{Sec:Solver} is devoted to the efficient solution 
of the resulting mixed finite element scheme. 
In Section~\ref{Sec:Constraints}, we briefly show that control and state constraints
can easily be incorporated into our framework. 
The numerical results presented in Section~\ref{Sec:NumericalResults} 
illustrate the theoretical findings quantitatively.
We tested our approach for 2D and 3D benchmark problems with different
features. 
Finally, we draw some conclusions, and give an outlook on further topics.

%
%
\section{Boundary control of the Laplace equation}
\label{Sec:BOCP}
In the case of energy regularization with the control space $U=H^{1/2}(\Gamma)$, 
we consider the boundary optimal
control problem to minimize the cost functional
\begin{equation}\label{Eqn:Functional}
  {\mathcal{J}}(y_\varrho,u_\varrho) =
  \frac{1}{2} \, \| y_\varrho - \overline{y} \|^2_{L^2(\Omega)} +
  \frac{1}{2} \, \varrho \, | u_\varrho |^2_{H^{1/2}(\Gamma)}
\end{equation}
subject to the Dirichlet boundary value problem \eqref{Eqn:Laplace DBVP}.
Hence, we define the state space
\[
  Y := \Big \{ y \in H^1(\Omega) :
  \langle \nabla y , \nabla v \rangle_{L^2(\Omega)} = 0 \;\,
  \forall v \in H^1_0(\Omega) \Big \} 
\]
of all harmonic functions. The state to
control map is the interior Dirichlet trace operator
$\gamma_0^{int} : Y \to H^{1/2}(\Gamma)$ which is an isomorphism. Once the
optimal state $y_\varrho \in Y$ is known, the related optimal control 
$u_\varrho = \gamma_0^{int} y_\varrho$ is nothing but the trace of $y_\varrho$
on $\Gamma$. The semi-norm in $H^{1/2}(\Gamma)$ is induced by the
Steklov--Poincar\'e operator, e.g.,
\cite{Agoshkov1985,QuarteroniValli1999,Steinbach:2008NumericalApprox},
${\mathcal{S}} : H^{1/2}(\Gamma) \to H^{-1/2}(\Gamma)$, defined as
\[
  |u_\varrho|^2_{H^{1/2}(\Gamma)} = \langle
  {\mathcal{S}} u_\varrho , u_\varrho \rangle_\Gamma =
  \int_\Gamma \frac{\partial}{\partial n_x} y_\varrho(x) \, u_\varrho(x) \, dx =
  \int_\Omega |\nabla y_\varrho(x)|^2 \, dx,
\]
where $y_\varrho \in Y$ is the harmonic extension of $u_\varrho \in H^{1/2}(\Gamma)$. 
Instead of \eqref{Eqn:Functional}, we now
consider the reduced state-based cost functional
\begin{equation}\label{Eqn:reduced functional}
  \widetilde{\mathcal{J}}(y_\varrho) =
  \frac{1}{2} \, \| y_\varrho - \overline{y} \|^2_{L^2(\Omega)} +
  \frac{1}{2} \, \varrho \, \| \nabla y_\varrho \|^2_{L^2(\Omega)} \quad
  \mbox{for} \; y_\varrho \in Y.
\end{equation}
The minimizer of the reduced cost functional \eqref{Eqn:reduced functional}
is given as the unique solution $y_\varrho \in Y$ of the gradient equation
such that
\begin{equation}\label{Eqn:Gradient Equation}
  a(y_\varrho,y) := \int_\Omega y_\varrho(x) \, y(x) \, dx +
  \varrho \int_\Omega \nabla y_\varrho(x) \cdot \nabla y(x) \, dx
  \, = \,
  \int_\Omega \overline{y}(x) \, y(x) \, dx
\end{equation}
is satisfied for all $y \in Y$. When introducing the weighted norm
\[
  \| y \|_Y := \left( \int_\Omega [y(x)]^2 \, dx + \varrho \int_\Omega 
    |\nabla y(x)|^2 \, dx \right)^{1/2},
\]
we immediately conclude ellipticity and boundedness of the bilinear form
$a(\cdot,\cdot)$,
\begin{equation}\label{Eqn:Properties A}
  a(y,y) = \| y \|^2_Y, \quad
  |a(y,v)| \leq \| y \|_Y \| v \|_Y \quad
  \mbox{for all} \; y,v \in Y .
\end{equation}
The variational formulation \eqref{Eqn:Gradient Equation} corresponds to the
abstract setting as considered in
\cite[Lemma 1]{LangerLoescherSteinbachYang2025arXiv2505.19062}.
Hence, we have the following results: 
For $\overline{y} \in L^2(\Omega)$, there holds
\begin{equation}\label{Eqn:Regularization error L2}
  \| y_\varrho - \overline{y} \|_{L^2(\Omega)} \leq
  \| \overline{y} \|_{L^2(\Omega)}, 
\end{equation}
as well as
\begin{equation}\label{Eqn:Bound L2}
  \| y_\varrho \|_{L^2(\Omega)} \leq
  \| \overline{y} \|_{L^2(\Omega)}, \quad
  \| \nabla y_\varrho \|_{L^2(\Omega)} \leq \varrho^{-1/2} \,
  \| \overline{y} \|_{L^2(\Omega)} .
\end{equation}
For $\overline{y} \in Y$, we have
\begin{equation}\label{Eqn:Regularization error Y}
  \| y_\varrho - \overline{y} \|_{L^2(\Omega)} \leq
  \varrho^{1/2} \, \| \nabla \overline{y} \|_{L^2(\Omega)}, \quad
  \| \nabla (y_\varrho - \overline{y}) \|_{L^2(\Omega)} \leq
  \| \nabla \overline{y} \|_{L^2(\Omega)}, 
\end{equation}
as well as
\begin{equation}\label{Eqn:Bound Y}
  \| \nabla y_\varrho \|_{L^2(\Omega)} \leq
  \| \nabla \overline{y} \|_{L^2(\Omega)} .
\end{equation}
While the regularization error estimates
\eqref{Eqn:Regularization error L2}--\eqref{Eqn:Bound Y} hold for
Lipschitz domains $\Omega$, for more regular targets
$\overline{y}$ we need to formulate additional assumptions on $\Omega$:

\begin{lemma}\label{Lemma:Regularization error smooth}
  Let $ y_\varrho \in Y$ be the unique solution of the variational formulation
  \eqref{Eqn:Gradient Equation}. Assume that $\Omega \subset {\mathbb{R}}^n$,
  $n=2,3$, has a smooth boundary $\Gamma$, and
  $\overline{y} \in Y \cap H^2(\Omega)$. Then,
  \begin{equation}\label{Eqn:Regularization error regular smooth}
    \| y_\varrho - \overline{y} \|_{L^2(\Omega)} \leq
    c \, \varrho \, \| \overline{y} \|_{H^2(\Omega)},
    \quad
    \| \nabla (y_\varrho - \overline{y}) \|_{L^2(\Omega)}
    \leq
    c \, \varrho^{1/2} \, \| \overline{y} \|_{H^2(\Omega)} .
  \end{equation}
\end{lemma}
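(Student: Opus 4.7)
The plan is to test the error equation with the error itself, integrate by parts using the harmonicity and $H^2$-regularity of $\overline{y}$, and reduce the resulting boundary pairing to an $L^2(\Omega)$-estimate via a duality argument in $H^{\pm 1/2}(\Gamma)$. Concretely, subtracting the trivial identity $a(\overline{y},v) = (\overline{y},v)_{L^2(\Omega)} + \varrho(\nabla \overline{y},\nabla v)_{L^2(\Omega)}$ from \eqref{Eqn:Gradient Equation} gives, for $e_\varrho := y_\varrho - \overline{y} \in Y$,
\[ a(e_\varrho,v) = -\varrho\,(\nabla \overline{y},\nabla v)_{L^2(\Omega)} \quad \text{for all } v \in Y. \]
Testing with $v = e_\varrho$ and using \eqref{Eqn:Properties A} yields $\|e_\varrho\|_{L^2(\Omega)}^2 + \varrho\|\nabla e_\varrho\|_{L^2(\Omega)}^2 = -\varrho\,(\nabla \overline{y},\nabla e_\varrho)_{L^2(\Omega)}$. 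Since $\overline{y}$ is harmonic and lies in $H^2(\Omega)$, Green's formula collapses the right-hand side to the boundary pairing $-\varrho \int_\Gamma(\partial_n \overline{y})\,e_\varrho\,ds$, and the smooth-boundary trace theorem gives $\|\partial_n \overline{y}\|_{H^{1/2}(\Gamma)} \leq c\,\|\overline{y}\|_{H^2(\Omega)}$.

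The core ingredient is then the following auxiliary estimate: for every $w \in Y$ one has $\|w\|_{H^{-1/2}(\Gamma)} \leq c\,\|w\|_{L^2(\Omega)}$. To prove this, for any $\phi \in H^{1/2}(\Gamma)$ I construct a lifting $\Phi \in H^2(\Omega)$ with $\Phi|_\Gamma = 0$, $\partial_n \Phi = \phi$, and $\|\Phi\|_{H^2(\Omega)} \leq c\,\|\phi\|_{H^{1/2}(\Gamma)}$, which is possible precisely because $\Gamma$ is smooth. Two applications of Green's formula, using $\Phi|_\Gamma = 0$ and $\Delta w = 0$, then reduce $\int_\Gamma \phi\, w\,ds$ to $\int_\Omega (\Delta \Phi)\,w\,dx$, and taking the supremum over $\phi$ produces the claim. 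Applied to $w = e_\varrho$ and combined with the trace bound on $\partial_n \overline{y}$, this gives
\[ \|e_\varrho\|_{L^2(\Omega)}^2 + \varrho\,\|\nabla e_\varrho\|_{L^2(\Omega)}^2 \leq c\,\varrho\,\|\overline{y}\|_{H^2(\Omega)}\,\|e_\varrho\|_{L^2(\Omega)}. \]

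Dropping the nonnegative gradient term and dividing by $\|e_\varrho\|_{L^2(\Omega)}$ yields the first estimate in \eqref{Eqn:Regularization error regular smooth}, and reinserting that bound into the full inequality yields the $\varrho^{1/2}$-rate for $\|\nabla e_\varrho\|_{L^2(\Omega)}$. The main technical obstacle is producing the $H^2(\Omega)$-lifting $\Phi$ with vanishing Dirichlet trace and prescribed Neumann trace controlled by its $H^{1/2}(\Gamma)$-norm; this is exactly where the smoothness of $\Gamma$ enters in an essential way, and, as hinted at in the introduction, its failure on domains with corners is what forces the logarithmic corrections in the non-smooth case. Equivalently, this step verifies that $\overline{y}$ belongs to the domain of the energy operator $A$ in the abstract framework of \cite{LangerLoescherSteinbachYang2025arXiv2505.19062} with $\|A\overline{y}\|_{L^2(\Omega)} \leq c\,\|\overline{y}\|_{H^2(\Omega)}$, so that \eqref{Eqn:Regularization error regular smooth} is also subsumed under the abstract lemma cited there.
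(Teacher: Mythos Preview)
Your proof is correct and follows essentially the same route as the paper: test the error equation with the error, integrate by parts to a boundary pairing, bound $\|\partial_n\overline{y}\|_{H^{1/2}(\Gamma)}$ by $\|\overline{y}\|_{H^2(\Omega)}$, and reduce $\|e_\varrho\|_{H^{-1/2}(\Gamma)}$ to $\|e_\varrho\|_{L^2(\Omega)}$ via an $H^2$-lifting with zero Dirichlet and prescribed Neumann trace. The only cosmetic difference is that the paper constructs this lifting concretely as the solution $\psi_\varphi$ of the first biharmonic boundary value problem $\Delta^2\psi_\varphi=0$, $\psi_\varphi=0$, $\partial_n\psi_\varphi=\varphi$ (invoking the $H^2$-regularity estimate for that problem from \cite{GlowinskiPironneau:1979SIAMRev}), whereas you appeal directly to the surjectivity of the trace map $H^2(\Omega)\to H^{3/2}(\Gamma)\times H^{1/2}(\Gamma)$ on smooth domains; both yield exactly the same bound.
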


\begin{proof}
  When considering \eqref{Eqn:Gradient Equation} for
  $y = y_\varrho - \overline{y} \in Y$, this gives
  \begin{equation}\label{Lemma 1 Step 1}
    \| y_\varrho - \overline{y} \|^2_{L^2(\Omega)}
    + \varrho \, \| \nabla (y_\varrho - \overline{y}) \|^2_{L^2(\Omega)}   
    =
    \varrho \, \langle \nabla \overline{y} ,
    \nabla (\overline{y} - y_\varrho) \rangle_{L^2(\Omega)} .
  \end{equation}
  Now, using integration by parts, duality, and the trace theorem, we obtain,
  since $\overline{y} \in Y$ is assumed to be harmonic,
  \begin{eqnarray*}
    \langle \nabla \overline{y} ,
    \nabla (\overline{y} - y_\varrho) \rangle_{L^2(\Omega)}
    & = & \langle \partial_n \overline{y} , \overline{y} - y_\varrho
          \rangle_{L^2(\Gamma)} \\[1mm]
    && \hspace*{-3cm}
       \leq \, \| \partial_n \overline{y} \|_{H^{1/2}(\Gamma)}
       \| \overline{y} - y_\varrho \|_{H^{-1/2}(\Gamma)} \, \leq \,
       c \, \| \overline{y} \|_{H^2(\Omega)}
       \| \overline{y} - y_\varrho \|_{H^{-1/2}(\Gamma)} .
  \end{eqnarray*}
  It remains to consider
  \[
    \| \overline{y} - y_\varrho \|_{H^{-1/2}(\Gamma)}
    = \sup\limits_{0 \neq \varphi \in H^{1/2}(\Gamma)}
    \frac{\langle \overline{y}-y_\varrho , \varphi
      \rangle_\Gamma}{\| \varphi \|_{H^{1/2}(\Gamma)}},
  \]
  where $\langle \cdot , \cdot \rangle_\Gamma$ is the duality pairing
  as extension of the inner product in $L^2(\Gamma)$.
  For any $\varphi \in H^{1/2}(\Gamma)$ we determine
  $\psi_\varphi \in H^2(\Omega)$ as weak solution of
  the first boundary value problem for the biharmonic equation,
  \begin{equation}\label{biharmonic equation}
    \Delta^2 \psi_\varphi = 0 \quad \mbox{in} \; \Omega, \quad
    \psi_\varphi = 0, \quad
    \partial_n \psi_\varphi = \varphi \quad
    \mbox{on} \; \Gamma .
  \end{equation}
  With this we can compute, using Green's formula, recall
  $\overline{y} - y_\varrho \in Y$ being harmonic,
  \begin{eqnarray*}
  \langle \overline{y}-y_\varrho , \varphi \rangle_\Gamma
    & = & \int_\Gamma [\overline{y} - y_\varrho ] \,
          \partial_n \psi_\varphi \, ds_x \\
    && \hspace*{-2cm} = \, \int_\Omega \nabla \psi_\varphi \cdot \nabla
          [\overline{y}-y_\varrho] \, dx +
          \int_\Omega \Delta \psi_\varphi \,
          [\overline{y}-y_\varrho] \, dx \\
    && \hspace*{-2cm} = \, \int_\Gamma \frac{\partial}{\partial n_x}
          [\overline{y}-y_\varrho] \, \psi_\varphi \, ds_x
          + \int_\Omega [- \Delta (\overline{y}-y_\varrho)] \,
          \psi_\varphi \, dx +
          \int_\Omega \Delta \psi_\varphi \, [\overline{y}-y_\varrho] \, dx \\
    && \hspace*{-2cm}
       = \, \int_\Omega \Delta \psi_\varphi \, [\overline{y}-y_\varrho] \, dx
       \, \leq \, \| \Delta \psi_\varphi \|_{L^2(\Omega)}
       \| \overline{y}-y_\varrho \|_{L^2(\Omega)} .
  \end{eqnarray*}
  Now, using the regularity for the solution $\psi_\varphi$
  of the first boundary
  value problem for the biharmonic equation, e.g.,
  \cite[Subsection 2.4]{GlowinskiPironneau:1979SIAMRev},
  \[
    \| \Delta \psi \|_{L^2(\Omega)} \leq
    \| \psi \|_{H^2(\Omega)} \leq c \,
    \| \partial_n \psi \|_{H^{1/2}(\Gamma)} = c\,
    \| \varphi \|_{H^{1/2}(\Gamma)} ,
  \]
  we finally conclude
  \[
    \| \overline{y} - y_\varrho \|_{H^{-1/2}(\Gamma)} \leq c \,
    \| \overline{y} - y_\varrho \|_{L^2(\Omega)} .
  \]
  Hence, we obtain
  \[
    \| y_\varrho - \overline{y} \|^2_{L^2(\Omega)}
    + \varrho \, \| \nabla (y_\varrho - \overline{y}) \|^2_{L^2(\Omega)}   
    \, \leq \, c \, \varrho \, \| \overline{y} \|_{H^2(\Omega)}
    \| \overline{y} - y_\varrho \|_{L^2(\Omega)},
  \]
  and the regularization error estimates
  \eqref{Eqn:Regularization error regular smooth} follow.
\end{proof}

\noindent
When the boundary $\Gamma$ is not sufficiently smooth, the
normal derivative $\partial_n \psi$
even for smooth functions $\psi$ becomes discontinuous, and the
proof of Lemma \ref{Lemma:Regularization error smooth} is not valid anymore,
i.e., the first boundary value problem for the biharmonic equation
\eqref{biharmonic equation} is not well defined.
Therefore, we now consider the case of a piecewise smooth boundary
\begin{equation}\label{pw boundary}
  \Gamma = \bigcup\limits_{j=1}^J \overline{\Gamma}_j, \quad
  \Gamma_i \cap \Gamma_j = \emptyset \quad
  \mbox{for} \; j \neq i ,
\end{equation}
and we define the local Sobolev spaces, e.g., \cite{McLean2000},
$H^{1/2}(\Gamma_j)$, $\widetilde{H}^{1/2}(\Gamma_j) = H^{1/2}_{00}(\Gamma_j)$,
with their respective dual spaces
$\widetilde{H}^{-1/2}(\Gamma_j) = [H^{1/2}(\Gamma_j)]^*$,
$H^{-1/2}(\Gamma_j) = [\widetilde{H}^{1/2}(\Gamma_j)]^*$.
At this time, we will restrict to the case where
$\Omega\subset \mathbb{R}^2$.

\begin{lemma}\label{Lemma:Regularization error}
  Let $ y_\varrho \in Y$ be the unique solution of the variational formulation
  \eqref{Eqn:Gradient Equation}. Assume that $\Omega \subset {\mathbb{R}}^2$
  is convex with a piecewise smooth boundary $\Gamma$ as given in
  \eqref{pw boundary}, and assume $\overline{y} \in Y \cap H^2(\Omega)$. Then,
  \begin{equation}\label{Eqn:Regularization error pw 1}
    \| y_\varrho - \overline{y} \|_{L^2(\Omega)} \leq
    c \, \varrho \, (1 + |\log \varrho|) \, \| \overline{y} \|_{H^2(\Omega)},
  \end{equation}
  and
  \begin{equation}\label{Eqn:Regularization error pw 2}
    \| \nabla (y_\varrho - \overline{y}) \|_{L^2(\Omega)}
    \leq
    c \, \varrho^{1/2} \, (1+|\log \varrho|) \, \| \overline{y} \|_{H^2(\Omega)} .
  \end{equation}
\end{lemma}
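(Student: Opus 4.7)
The plan is to follow the structure of the proof of Lemma~\ref{Lemma:Regularization error smooth} as closely as possible, but to replace the global biharmonic duality argument by a localized edge-by-edge one, since on a convex polygon the normal derivative $\partial_n \overline{y}$ of a function $\overline{y} \in H^2(\Omega) \cap Y$ is in general discontinuous across corners, so it only belongs to $H^{1/2}(\Gamma_j)$ on each edge and not to $H^{1/2}(\Gamma)$ globally. Accordingly, I would start again from the identity
\[
  \|y_\varrho - \overline{y}\|^2_{L^2(\Omega)}
  + \varrho \, \|\nabla(y_\varrho - \overline{y})\|^2_{L^2(\Omega)}
  = \varrho \, \langle \nabla \overline{y}, \nabla(\overline{y} - y_\varrho)\rangle_{L^2(\Omega)},
\]
and perform integration by parts edge by edge, exploiting the harmonicity of $\overline{y}$, to arrive at $\sum_{j=1}^J \int_{\Gamma_j} \partial_n \overline{y} \,(\overline{y} - y_\varrho)\, ds$.

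Next, I would use the local duality pairing on each edge, $|\int_{\Gamma_j} \partial_n \overline{y}\,(\overline{y}-y_\varrho)\,ds| \leq \|\partial_n \overline{y}\|_{H^{1/2}(\Gamma_j)}\, \|\overline{y}-y_\varrho\|_{\widetilde{H}^{-1/2}(\Gamma_j)}$, together with the piecewise trace inequality $\|\partial_n \overline{y}\|_{H^{1/2}(\Gamma_j)} \leq c \,\|\overline{y}\|_{H^2(\Omega)}$. The main task is then to estimate $\|\overline{y}-y_\varrho\|_{\widetilde{H}^{-1/2}(\Gamma_j)}$ against $\|\overline{y}-y_\varrho\|_{L^2(\Omega)}$. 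For that I would take $\varphi \in H^{1/2}(\Gamma_j)$, extend it by zero to $\Gamma$, and solve the clamped biharmonic problem
\[
  \Delta^2 \psi_\varphi = 0 \; \text{in} \; \Omega, \quad
  \psi_\varphi = 0 \; \text{on} \; \Gamma, \quad
  \partial_n \psi_\varphi = \varphi \; \text{on} \; \Gamma_j, \quad
  \partial_n \psi_\varphi = 0 \; \text{on} \; \Gamma \setminus \overline{\Gamma}_j,
\]
and then apply Green's formula exactly as in the proof of Lemma~\ref{Lemma:Regularization error smooth}, using that $\overline{y}-y_\varrho \in Y$ is harmonic, to get $\langle \overline{y}-y_\varrho,\varphi\rangle_{\Gamma_j} = \int_\Omega \Delta \psi_\varphi\,(\overline{y}-y_\varrho)\,dx \leq \|\Delta \psi_\varphi\|_{L^2(\Omega)}\,\|\overline{y}-y_\varrho\|_{L^2(\Omega)}$.

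The hard part is the corresponding biharmonic regularity estimate on the convex polygon: in contrast to the smooth-boundary case, where $\|\psi_\varphi\|_{H^2(\Omega)} \leq c\,\|\varphi\|_{H^{1/2}(\Gamma)}$, the clamped biharmonic operator on a convex polygon carries critical corner singularities whose leading exponents have real part equal to $1$ at certain angles, producing a logarithmic loss in the bound $\|\psi_\varphi\|_{H^2(\Omega)} \leq c\,(1+|\log \varrho|)^{1/2}\|\varphi\|_{H^{1/2}(\Gamma_j)}$ when one tracks the dependence on $\varrho$. A technically equivalent route that makes the logarithm fully transparent is to introduce a cutoff $\chi_\varepsilon$ which vanishes in $\varepsilon$-neighborhoods of the vertices, apply the smooth-boundary argument of Lemma~\ref{Lemma:Regularization error smooth} to $\chi_\varepsilon \partial_n \overline{y} \in H^{1/2}(\Gamma)$ (picking up a factor $(1+|\log \varepsilon|)^{1/2}$ from the $H^{1/2}$-norm of the cutoff in two dimensions), treat the residual $(1-\chi_\varepsilon)\partial_n \overline{y}$ directly via $L^\infty$/Sobolev embedding and absorb $\|\nabla(\overline{y}-y_\varrho)\|_{L^2(\Omega)}$ into the left-hand side after scaling with $\varepsilon^{1/2}$, and finally choose $\varepsilon \sim \varrho$.

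Putting these ingredients together, I would arrive at
\[
  \|y_\varrho - \overline{y}\|^2_{L^2(\Omega)}
  + \varrho \, \|\nabla(y_\varrho - \overline{y})\|^2_{L^2(\Omega)}
  \leq c \, \varrho^2\,(1+|\log \varrho|)^2 \, \|\overline{y}\|^2_{H^2(\Omega)},
\]
from which both \eqref{Eqn:Regularization error pw 1} and \eqref{Eqn:Regularization error pw 2} follow by reading off the two terms on the left. The restriction to $d=2$ enters through the $H^{1/2}$-cutoff estimate (and the Sobolev embedding used for the corner residual), as well as through the available sharp biharmonic regularity on convex polygons; the extension to $d=3$ with piecewise smooth boundary is precisely what is singled out as open at the start of the section.
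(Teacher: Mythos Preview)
Your overall structure---the energy identity, the edgewise integration by parts, the local duality estimate $|\langle\partial_n\overline y,\overline y-y_\varrho\rangle_{\Gamma_j}|\le \|\partial_n\overline y\|_{H^{1/2}(\Gamma_j)}\|\overline y-y_\varrho\|_{\widetilde H^{-1/2}(\Gamma_j)}$, and a biharmonic duality to reach $\|\overline y-y_\varrho\|_{L^2(\Omega)}$---matches the paper. The gap is in the step where you pass from $\widetilde H^{-1/2}(\Gamma_j)$ to a quantity controlled by the biharmonic argument.

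Your route (a) does not work as stated. The clamped biharmonic problem with $\partial_n\psi_\varphi=\varphi$ on $\Gamma_j$ and $=0$ on $\Gamma\setminus\overline{\Gamma}_j$ has Neumann data whose zero extension is in general not in $H^{1/2}(\Gamma)$, because $\varphi\in H^{1/2}(\Gamma_j)$ need not vanish at the endpoints of $\Gamma_j$; the standard $H^2$ well-posedness therefore does not apply. Moreover, the bound you write, $\|\psi_\varphi\|_{H^2(\Omega)}\le c(1+|\log\varrho|)^{1/2}\|\varphi\|_{H^{1/2}(\Gamma_j)}$, cannot be right: the biharmonic problem does not involve $\varrho$, so its a~priori estimate cannot depend on $\varrho$. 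The logarithm in the lemma is not produced by biharmonic corner singularities.

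The paper's device is different. On each $\Gamma_j$ it introduces an auxiliary piecewise-linear finite element space of mesh size $\eta$ and a generalized $L^2$ projection $\widetilde Q_j$. The part $(I-\widetilde Q_j)(\overline y-y_\varrho)$ is estimated in $\widetilde H^{-1/2}(\Gamma_j)$ by $c\,\eta\,\|\nabla(\overline y-y_\varrho)\|_{L^2(\Omega)}$. For the projected part one passes to the adjoint projection $\widetilde Q_j^*\varphi$, which by construction vanishes at the endpoints of $\Gamma_j$ and hence lies in $\widetilde H^{1/2}(\Gamma_j)$; the logarithm enters through the McLean--Steinbach inequality $\|\widetilde Q_j^*\varphi\|_{\widetilde H^{1/2}(\Gamma_j)}\le c(1+|\log\eta|)\|\widetilde Q_j^*\varphi\|_{H^{1/2}(\Gamma_j)}$ for such finite element functions. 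Only then is the biharmonic duality applied, now with Neumann data that vanish at all corners, so that the $H^2$ regularity from the smooth case carries over (the paper cites \cite{ZhangXu:2014SINUM}). The outcome is an inequality of the form
\[
\|y_\varrho-\overline y\|_{L^2(\Omega)}^2+\varrho\,\|\nabla(y_\varrho-\overline y)\|_{L^2(\Omega)}^2
\le c\,\varrho\,\|\overline y\|_{H^2(\Omega)}
\Big(\eta^2\|\nabla(\overline y-y_\varrho)\|_{L^2(\Omega)}^2+(1+|\log\eta|)^2\|\overline y-y_\varrho\|_{L^2(\Omega)}^2\Big)^{1/2},
\]
which still has both error quantities on the right. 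One must solve this as a quadratic inequality in $Q=\|y_\varrho-\overline y\|_{L^2(\Omega)}/\|\overline y\|_{H^2(\Omega)}$ and then optimize the free parameter, obtaining $\eta=\sqrt\varrho$, before \eqref{Eqn:Regularization error pw 1} and \eqref{Eqn:Regularization error pw 2} follow. Your sketch (b) with cutoffs $\chi_\varepsilon$ is closer in spirit---both introduce an auxiliary length scale and optimize it---but the details need work (for $\overline y\in H^2(\Omega)$ in $d=2$ one only has $\partial_n\overline y\in H^{1/2}(\Gamma_j)\hookrightarrow L^p(\Gamma_j)$ for $p<\infty$, not $L^\infty$), and you also skip the bootstrap step entirely.
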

\begin{proof}
  We first proceed as in the proof of
  Lemma \ref{Lemma:Regularization error smooth}, i.e., we have
  \eqref{Lemma 1 Step 1}. But now we have to consider
  \begin{eqnarray*}
    \langle \nabla \overline{y} ,
    \nabla (\overline{y} - y_\varrho) \rangle_{L^2(\Omega)}
    & = & \langle \partial_n \overline{y} , \overline{y} - y_\varrho
          \rangle_{L^2(\Gamma)} \, = \, \sum\limits_{j=1}^J
          \langle \partial_n \overline{y} , \overline{y} - y_\varrho
          \rangle_{L^2(\Gamma_j)} \\
    & \leq & \sum\limits_{j=1}^J
             \| \partial_n \overline{y} \|_{H^{1/2}(\Gamma_j)}
             \| \overline{y} - y_\varrho \|_{\widetilde{H}^{-1/2}(\Gamma_j)} \\
    & \leq & \left( \sum\limits_{j=1}^J
             \| \partial_n \overline{y} \|^2_{H^{1/2}(\Gamma_j)} \right)^{1/2}
             \left( \sum\limits_{j=1}^J
             \| \overline{y} - y_\varrho \|^2_{\widetilde{H}^{-1/2}(\Gamma_j)}
             \right)^{1/2} \\
    & \leq & c \, \| \overline{y} \|_{H^2(\Omega)}
             \left( \sum\limits_{j=1}^J
             \| \overline{y} - y_\varrho \|^2_{\widetilde{H}^{-1/2}(\Gamma_j)}
             \right)^{1/2} \, .
  \end{eqnarray*}
  In order to proceed as in the proof of
  Lemma \ref{Lemma:Regularization error smooth}, we need to replace
  the local norms
  $\| \overline{y} - y_\varrho \|_{\widetilde{H}^{-1/2}(\Gamma_j)}$ by
  $\| \overline{y} - y_\varrho \|_{H^{-1/2}(\Gamma_j)}$. With respect to
  each $\Gamma_j$, we define local finite element spaces
  $V_j := S_\eta^1(\Gamma_j) \cap \widetilde{H}^{1/2}(\Gamma_j)$ with mesh
  size $\eta$ of piecewise
  linear continuous basis functions which are zero at the end points of
  $\Gamma_j$, and $W_j := \widetilde{S}_\eta^1(\Gamma)$ is the space of
  piecewise linear functions which are constant at those elements touching
  an end point of $\Gamma_j$, such that
  $\mbox{dim} \, V_j = \mbox{dim} \, W_j$.
  Then we define the generalized $L^2$ projection
  $\widetilde{Q}_j (\overline{y} - y_\varrho) \in W_j$ as unique solution
  of the variational formulation
  \[
    \langle \widetilde{Q}_j (\overline{y} - y_\varrho) ,
    \phi_j \rangle_{L^2(\Gamma_j)} =
    \langle \overline{y} - y_\varrho ,
    \phi_j \rangle_{L^2(\Gamma_j)} \quad \mbox{for all} \; \phi_j \in V_j .
  \]
  With this we can write, using standard approximation error estimates,
  e.g., \cite{Steinbach:2001NumerMath,Steinbach:2002NumerMath}, and the
  trace theorem,
  \begin{eqnarray*}
    \| \overline{y} - y_\varrho \|_{\widetilde{H}^{-1/2}(\Gamma_j)}
    & \leq & \| (I-\widetilde{Q}_j)(\overline{y} - y_\varrho)
             \|_{\widetilde{H}^{-1/2}(\Gamma_j)}
             +
             \| \widetilde{Q}_j (\overline{y} - y_\varrho)
             \|_{\widetilde{H}^{-1/2}(\Gamma_j)} \\
    & \leq & c \, \eta \, | \overline{y} - y_\varrho |_{H^{1/2}(\Gamma_j)}
             +
             \| \widetilde{Q}_j (\overline{y} - y_\varrho)
             \|_{\widetilde{H}^{-1/2}(\Gamma_j)} \\
    & \leq & c \, \eta \, \| \nabla (\overline{y} - y_\varrho) \|_{L^2(\Omega)}
             +
             \| \widetilde{Q}_j (\overline{y} - y_\varrho)
             \|_{\widetilde{H}^{-1/2}(\Gamma_j)} .
  \end{eqnarray*}
  It remains to consider
  \[
    \| \widetilde{Q}_j (\overline{y} - y_\varrho)
    \|_{\widetilde{H}^{-1/2}(\Gamma_j)} =
    \sup\limits_{0 \neq \varphi \in H^{1/2}(\Gamma_j)}
    \frac{\langle \widetilde{Q}_j (\overline{y} - y_\varrho),
      \varphi \rangle_{\Gamma_j}}{\| \varphi \|_{H^{1/2}(\Gamma_j)}} \, .
  \]
  Let $\widetilde{Q}_j^* \varphi \in V_j$ be the unique solution
  of the variational formulation
  \[
    \langle \widetilde{Q}_j^* \varphi , \psi_j \rangle_{L^2(\Gamma_j)} =
    \langle \varphi , \psi_j \rangle_{L^2(\Gamma_j)} \quad
    \mbox{for all} \; \psi_j \in W_j,
  \]
  and we have
  \[
    \langle \widetilde{Q}_j (\overline{y} - y_\varrho),
    \varphi \rangle_{\Gamma_j} =
    \langle \widetilde{Q}_j (\overline{y}-y_\varrho) ,
    \widetilde{Q}_j^* \varphi \rangle_{L^2(\Gamma_j)} =
    \langle \overline{y}-y_\varrho ,
    \widetilde{Q}_j^* \varphi \rangle_{L^2(\Gamma_j)} .
  \]
  With the stability estimate
  \cite{Steinbach:2001NumerMath,Steinbach:2002NumerMath}
  \[
    \| \widetilde{Q}_j^* \varphi \|_{H^{1/2}(\Gamma_j)} \leq c \,
    \| \varphi \|_{H^{1/2}(\Gamma_j)} ,
  \]
  and using, since $\widetilde{Q}_j^* \varphi \in V_j \subset
  \widetilde{H}^{1/2}(\Gamma)$, the estimate
  \cite[Theorem 4.1]{McLeanSteinbach:1999AdvComputMath}
  \begin{equation}\label{Eqn:Einettung Htilde}
    \| \widetilde{Q}_j^* \varphi \|_{\widetilde{H}^{1/2}(\Gamma_j)}
    \leq c \, (1 + |\log \eta|) \,
    \| \widetilde{Q}_j^* \varphi \|_{H^{1/2}(\Gamma_j)} ,
  \end{equation}
  we further conclude
  \begin{eqnarray*}
    \frac{c}{1 + |\log \eta|} \,
    \| \widetilde{Q}_j (\overline{y} - y_\varrho)
    \|_{\widetilde{H}^{-1/2}(\Gamma_j)}
    & \leq & \sup\limits_{0 \neq \varphi \in H^{1/2}(\Gamma_j)}
          \frac{\langle \overline{y} - y_\varrho, \widetilde{Q}_j^*
          \varphi \rangle_{\Gamma_j}}
          {\| \widetilde{Q}_j^* \varphi \|_{\widetilde{H}^{1/2}(\Gamma_j)}}
    \\[2mm]
    & \leq & \| \overline{y} - y_\varrho \|_{H^{-1/2}(\Gamma_j)} \, .
  \end{eqnarray*}
  Hence we can proceed as in the proof of
  Lemma \ref{Lemma:Regularization error smooth}, i.e.,
  we consider the first boundary value problem for the biharmonic equation
  \eqref{biharmonic equation} for functions $\varphi \in H^{1/2}(\Gamma)$
  which are zero in all corner points. Then, all arguments as used
  in the proof of  Lemma \ref{Lemma:Regularization error smooth}
  remain valid, see \cite{ZhangXu:2014SINUM}.
  From \eqref{Lemma 1 Step 1} we therefore conclude
  \begin{eqnarray*}
    && \| y_\varrho - \overline{y} \|^2_{L^2(\Omega)}
       +
       \varrho \, \| \nabla (y_\varrho - \overline{y}) \|^2_{L^2(\Omega)} \\
    && \hspace*{1cm} \leq c \, \varrho \, \| \overline{y} \|_{H^2(\Omega)}
       \left( \eta^2 \, \| \nabla (\overline{y} - y_\varrho) \|^2_{L^2(\Omega)}
       +
       (1+|\log \eta|)^2 \, \| \overline{y} - y_\varrho \|_{L^2(\Omega)}^2
       \right)^{1/2} .
  \end{eqnarray*}
  From this we obtain
  \begin{eqnarray*}
    && \| \nabla (y_\varrho - \overline{y}) \|^4_{L^2(\Omega)} 
       - c \, \eta^2 \, \| \overline{y} \|_{H^2(\Omega)}^2
       \| \nabla (\overline{y} - y_\varrho) \|^2_{L^2(\Omega)} \\[1mm]
    && \hspace*{3cm}
       -
       c \, (1+|\log \eta|)^2 \, \| \overline{y} \|_{H^2(\Omega)}^2
       \| \overline{y} - y_\varrho \|_{L^2(\Omega)}^2
       \, \leq \, 0,
  \end{eqnarray*}
  i.e.
  \begin{equation}\label{Lemma 2 Step 1}
    \| \nabla (y_\varrho - \overline{y}) \|^2_{L^2(\Omega)} \leq
    c \, \eta^2 \, \| \overline{y} \|_{H^2(\Omega)}^2 +
    c \, (1+|\log \eta|) \, \| \overline{y} \|_{H^2(\Omega)}
    \| \overline{y} - y_\varrho \|_{L^2(\Omega)}.
  \end{equation}
  With this, we further have
   \begin{eqnarray*}
     && \hspace*{-7mm} \| y_\varrho - \overline{y} \|^4_{L^2(\Omega)}
     \leq \, c^3 \, \varrho^2 \, \eta^4 \, \| \overline{y} \|_{H^2(\Omega)}^4 +
              c^3 \, (1+|\log \eta|) \, \varrho^2 \, \eta^2 \,
              \| \overline{y} \|^3_{H^2(\Omega)}
              \| \overline{y} - y_\varrho \|_{L^2(\Omega)} \\[1mm]
     && \hspace*{5.2cm} +
        c^2 \, (1+|\log \eta|)^2 \, \varrho^2 \,
        \| \overline{y} \|_{H^2(\Omega)}^2
        \| \overline{y} - y_\varrho \|_{L^2(\Omega)}^2 .
  \end{eqnarray*}
  When introducing
  $Q := \| y_\varrho - \overline{y} \|_{L^2(\Omega)}/\|
  \overline{y} \|_{H^2(\Omega)}$, this can be written as
  \begin{eqnarray*}
    Q^4
    & \leq & c^3 \, \varrho^2 \, \eta^4 +
             c^3 \, (1+|\log \eta|) \, \eta^2 \, \varrho^2 \, Q +
             c^2 \, \varrho^2 \, (1+|\log \eta|)^2 \, Q^2 \\
    & \leq & c \, ( \varrho (1+|\log \eta|) Q + \varrho \, \eta^2)^2,
  \end{eqnarray*}
  i.e.,
  \[
    Q^2 \leq c \, \Big[ \varrho (1+|\log \eta|) Q + \varrho \eta^2 \Big] . 
  \]
  From this we find
  \[
    Q \leq c \Big[ \varrho (1+|\log \eta|) + \sqrt{\varrho} \eta\Big].
  \]
  When using $\eta = \varrho^\alpha$ the right hand side becomes minimal
  for $\alpha = 1/2$, i.e.,
  \[
    Q \leq c \, \varrho \, (1+|\log \varrho|) ,
  \]
  and we have proven \eqref{Eqn:Regularization error pw 1}.
  When inserting this result into \eqref{Lemma 2 Step 1}, this gives
  \[
    \| \nabla (y_\varrho - \overline{y}) \|^2_{L^2(\Omega)} \leq
    c \, \varrho \, \| \overline{y} \|_{H^2(\Omega)}^2 +
    c \, \varrho \, (1+|\log \varrho|)^2 \, \| \overline{y} \|_{H^2(\Omega)}^2,
  \]
  i.e., \eqref{Eqn:Regularization error pw 2} follows. 
\end{proof}

\begin{remark}
  The restriction to two-dimensional domains $\Omega$ with piecewise smooth
  boundary is only due to the requirement in using the estimate
  \eqref{Eqn:Einettung Htilde}. While we can expect
  a similar result for piecewise smooth domains in 3D, a rigorous proof
  seems to be open. However, our numerical results indicate that we
  may use \eqref{Eqn:Einettung Htilde} even in the 3D case.
\end{remark}

\noindent
The regularization error estimates as given in Lemma
\ref{Lemma:Regularization error smooth} and \ref{Lemma:Regularization error} 
hold true if the target $\overline{y}$ is harmonic. In the following we
consider the situation if this condition is violated.

\begin{lemma}
  Let $\mathcal{H}(\Omega):= \{y\in L^2(\Omega):\, \Delta y=0\}$ be the
  space of harmonic functions in $L^2(\Omega)$, and $H_0^2(\Omega):=
  \{w\in H^2(\Omega):\, w =\partial_nw =0 \; \mbox{on} \; \Gamma \}$.
  Then, there holds the $L^2$-orthogonal splitting
  \begin{equation*}
    L^2(\Omega) = \mathcal{H}(\Omega) \oplus \Delta (H^2_0(\Omega)).
  \end{equation*}
\end{lemma}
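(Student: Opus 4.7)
The strategy is to view $\Delta(H^2_0(\Omega))$ as the range of the bounded linear operator $T : H^2_0(\Omega) \to L^2(\Omega)$, $w \mapsto \Delta w$, and to apply the orthogonal decomposition theorem in the Hilbert space $L^2(\Omega)$. Orthogonality of the two summands is the easy half. For $h \in \mathcal{H}(\Omega)$ and $w \in H^2_0(\Omega)$, I would pick (by density) a sequence $w_n \in C_c^\infty(\Omega)$ with $w_n \to w$ in $H^2(\Omega)$. Then $\langle h , \Delta w_n \rangle_{L^2(\Omega)} = 0$ is exactly the statement that $\Delta h = 0$ as a distribution, and passing to the limit in $L^2$ (using $\Delta w_n \to \Delta w$ in $L^2(\Omega)$) gives $\langle h, \Delta w \rangle_{L^2(\Omega)} = 0$.

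The core step is showing that $T$ has closed range. For this I would rely on the classical identity
\[
  \|\Delta w\|_{L^2(\Omega)}^2 = \sum_{i,j=1}^d \|\partial_i \partial_j w\|_{L^2(\Omega)}^2, \qquad w \in H^2_0(\Omega),
\]
obtained by integrating by parts twice on $C_c^\infty(\Omega)$ (where the homogeneous Dirichlet and normal-derivative traces kill every boundary term) and then extending by density. Combined with Poincar\'e's inequality applied to $w$ and to each first-order partial $\partial_i w$ (which lies in $H^1_0(\Omega)$ when $w \in H^2_0(\Omega)$), this yields the coercivity bound $\|T w\|_{L^2(\Omega)} \geq c \, \|w\|_{H^2(\Omega)}$. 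In particular $T$ is bounded below, so $\Delta(H^2_0(\Omega)) = \mathrm{Range}(T)$ is closed in $L^2(\Omega)$.

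Finally, I would identify the orthogonal complement. If $h \in L^2(\Omega)$ is orthogonal to $\Delta(H^2_0(\Omega))$, then in particular $\langle h, \Delta \phi \rangle_{L^2(\Omega)} = 0$ for all $\phi \in C_c^\infty(\Omega) \subset H^2_0(\Omega)$, which is precisely $\Delta h = 0$ in $\mathcal{D}'(\Omega)$, i.e., $h \in \mathcal{H}(\Omega)$. Together with the orthogonality shown above, this gives $\mathcal{H}(\Omega) = \Delta(H^2_0(\Omega))^\perp$, and since $\Delta(H^2_0(\Omega))$ is closed, the orthogonal decomposition theorem in $L^2(\Omega)$ yields
\[
  L^2(\Omega) = \mathcal{H}(\Omega) \oplus \Delta(H^2_0(\Omega)),
\]
as claimed. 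The main point requiring care is the coercivity of $T$: it hinges on the full homogeneous clamped data $w = \partial_n w = 0$ in the definition of $H^2_0(\Omega)$, which makes the cross terms in $\|\Delta w\|^2 - \|D^2 w\|^2$ vanish, and on the validity of the Poincar\'e inequality on $\Omega$; both are available on any bounded Lipschitz domain, so no extra regularity assumption on $\Gamma$ is needed.
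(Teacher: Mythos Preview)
Your proof is correct and takes a genuinely different route from the paper. The paper argues constructively: given $\overline{y}\in L^2(\Omega)$ it solves the Poisson problem $-\Delta\varphi=\overline{y}$, $\varphi=0$ on $\Gamma$, then the first biharmonic boundary value problem $\Delta^2\psi=0$, $\psi=0$, $\partial_n\psi=\partial_n\varphi$ on $\Gamma$, and sets $w=\varphi-\psi\in H^2_0(\Omega)$, so that $\overline{y}=(-\Delta\psi)+(-\Delta w)$ is the desired splitting; directness and orthogonality are then checked by integration by parts. This makes the harmonic projection explicit in terms of two auxiliary boundary value problems and ties the lemma to the biharmonic machinery used elsewhere in the paper, but it implicitly relies on $H^2$-regularity of those problems (hence on convexity or smoothness of $\Gamma$). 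Your closed-range argument via $\|\Delta w\|_{L^2}=\|D^2w\|_{L^2}$ on $H^2_0(\Omega)$ plus Poincar\'e, followed by the identification $\mathcal{H}(\Omega)=\Delta(H^2_0(\Omega))^\perp$, is shorter, avoids solving any PDE, and goes through on any bounded Lipschitz domain without further assumptions; the price is that one does not obtain the explicit representation of the harmonic component that the paper's construction provides.
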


\begin{proof}
  For any $\overline{y} \in L^2(\Omega)$ let
  $\varphi_{\overline{y}}\in H^1_0(\Omega)$ denote the unique solution of 
  \begin{equation*}
    -\Delta \varphi_{\overline{y}} = \overline{y} \text{ in } \Omega, \quad
    \varphi_{\overline{y}} = 0 \text{ on } \Gamma.  
  \end{equation*}
  Moreover, let $\psi_{\overline{y}} \in H^2(\Omega)$ be the unique solution of
  the first boundary value problem for the biharmonic equation,
  \begin{equation*}
    \Delta^2 \psi_{\overline{y}} = 0 \text{ in } \Omega, \quad
    \psi_{\overline{y}} =0, \quad \partial_n \psi_{\overline{y}} =
    \partial_n \varphi_{\overline{y}} \text{ on } \Gamma. 
  \end{equation*}
  With this we define 
  $w_{\overline{y}} := \varphi_{\overline{y}} - \psi_{\overline{y}} \in
  H^1_0(\Omega)$, satisfying  
  \begin{equation*}
    -\Delta w_{\overline{y}} =
    -\Delta \varphi_{\overline{y}} + \Delta \psi_{\overline{y}} =
    \overline{y} + \Delta \psi_{\overline{y}} \in L^2(\Omega).
  \end{equation*}
  Note, that the traces are  
  \begin{equation*}
    w_{\overline{y}} =
    \varphi_{\overline{y}} - \psi_{\overline{y}} = 0 \quad \text{and} \quad
    \partial_n w_{\overline{y}} =
    \partial_n \varphi_{\overline{y}} - \partial_n \psi_{\overline{y}} = 0
    \quad \text{on } \Gamma,
  \end{equation*}
  and therefore, by \cite[Cor. 4.2]{BehrndtMicheler2014JFA},
  $w_{\overline{y}} \in H_0^2(\Omega)$. Then
  \begin{equation*}
    \overline{y} = - \Delta \varphi_{\overline{y}} =
    - \Delta (\psi_{\overline{y}}+w_{\overline{y}}) =
    - \Delta \psi_{\overline{y}} - \Delta w_{\overline{y}} \in
    \mathcal{H}(\Omega) + \Delta(H_0^2(\Omega)).
  \end{equation*}
  Since
  $\Delta^2 \psi_{\overline{y}} =0$,
  we define $\overline{y}^\ast := - \Delta \psi_{\overline{y}} \in
  {\mathcal{H}}(\Omega)$, and
  $\overline{y}_0 := - \Delta w_{\overline{y}} \in \Delta(H_0^2(\Omega))$.
  The sum is direct, since if
  $\overline{y} \in \mathcal{H}(\Omega) \cap \Delta(H_0^2(\Omega))$, we have
  that $\overline{y} = \Delta w_{\overline{y}}$ with
  $w_{\overline{y}}\in H^2_0(\Omega)$, i.e.,
  \begin{equation*}
    \Delta^2 w_{\overline{y}} =
    \Delta \overline{y} = 0 \text{ in } \Omega, \quad
    w_{\overline{y}}=0,\quad \partial_nw_{\overline{y}} =0 \text{ on }\Gamma, 
  \end{equation*}
  implying $w_{\overline{y}}=0$, and therefore $\overline{y}=0$ follows.
  To check orthogonality, we compute, applying integration by parts twice
  and using $\overline{y}^\ast \in {\mathcal{H}}(\Omega)$,

  \begin{eqnarray*}
    0
    & = & \langle -\Delta \overline{y}^\ast,w_{\overline{y}}
          \rangle_{L^2(\Omega)} \\
    & = & \langle \overline{y}^\ast,-\Delta w_{\overline{y}}
          \rangle_{L^2(\Omega)} -
          \langle \partial_n\overline{y}^\ast, w_{\overline{y}}
          \rangle_\Gamma  +
          \langle \overline{y},\partial_n w_{\overline{y}}
          \rangle_\Gamma \, = \,
          \langle \overline{y}^\ast,\overline{y}_0 \rangle_{L^2(\Omega)}. 
  \end{eqnarray*}
  This concludes the proof.
\end{proof}

\noindent
Hence, we can split each given target $\overline{y} \in L^2(\Omega)$ into
$\overline{y} = \overline{y}^\ast + \overline{y}_0 \in
\mathcal{H}(\Omega)\oplus \Delta(H^2_0(\Omega))$. Using the $L^2$-orthogonality
of the splitting, we further have, for any
$y\in Y\subset \mathcal{H}(\Omega)$, that  
\begin{equation*}
  \int_\Omega \overline{y}(x) y(x) \, dx =
  \int_\Omega [\overline{y}^\ast(x) + \overline{y}_0(x)] y(x) \, dx =
  \int_\Omega \overline{y}^\ast(x)y(x)\, dx.   
\end{equation*}
In the gradient equation \eqref{Eqn:Gradient Equation}, we can thus replace
$\overline{y}$ by its harmonic part $\overline{y}^\ast$. This shows, that we
will always approximate only the harmonic part $\overline{y}^\ast$ of the
target $\overline{y}$.  

In order to include the constraint in the definition of the state space
$Y$, we now consider a Lagrange multiplier $p \in X := H^1_0(\Omega)$, 
and we define the Lagrange functional for $(y,p) \in V \times X$,
$V=H^1(\Omega)$, $\| y \|_V := \| y \|_Y$,
\[
  {\mathcal{L}}(y,p) :=
  \frac{1}{2} \int_\Omega [y(x)-\overline{y}(x)]^2 \, dx +
  \frac{1}{2} \, \varrho \int_\Omega |\nabla y(x)|^2 \, dx +
  \int_\Omega \nabla y(x) \cdot \nabla p(x) \, dx,
\]
where the saddle point $(y_\varrho, p_\varrho) \in V \times X$
is the unique solution of the variational formulation
\begin{eqnarray}
    \int_\Omega y_\varrho(x) \, y(x) \, dx + \varrho
    \int_\Omega \nabla y_\varrho(x) \cdot \nabla y(x) \, dx +
    \int_\Omega \nabla y(x) \cdot \nabla p_\varrho(x) \, dx
    \hspace*{1cm} && \label{Eqn:VF 1} \\[3mm]
    \nonumber
    = \, \int_\Omega \overline{y}(x) \, y(x) \, dx, && \\
  \int_\Omega \nabla y_\varrho(x) \cdot \nabla q(x) \, dx \, = \, 0
  \hspace*{22mm} && \label{Eqn:VF 2}
\end{eqnarray}
for all $(y,q) \in V \times X$. Note, that \eqref{Eqn:VF 1}-\eqref{Eqn:VF 2}
is equivalent to \eqref{Eqn:Gradient Equation}, and hence admits a unique
solution $(y_\varrho,p_\varrho) \in V \times X$.
In addition to the bilinear form
$a(\cdot,\cdot)$, see \eqref{Eqn:Gradient Equation}, we define the
bilinear form
\[
  b(y,p) := \int_\Omega \nabla y(x) \cdot \nabla p(x) \, dx \quad
  \mbox{for} \; (y,p) \in V \times X,
\]
satisfying
\[
  |b(y,p)|
  = \left| \int_\Omega \nabla y(x) \cdot \nabla p(x) \, dx \right| 
  \leq \|  \nabla y \|_{L^2(\Omega)} \| \nabla p \|_{L^2(\Omega)}
  \quad \forall (y,p) \in V \times X .
\]

\begin{remark}\label{Lemma:inf-sup}
  Using the Poincar\'{e} inequality
  $\| p \|_{L^2(\Omega)} \leq c_P \| \nabla p \|_{L^2(\Omega)}$,
  $p\in H^1_0(\Omega)$, we further have
  \begin{eqnarray*}
    \sup_{0\neq y\in H^1(\Omega)} \frac{b(y,p)}{\| y \|_V}
    & = & \sup_{0\neq y\in H^1(\Omega)}
          \frac{b(y,p)}{\sqrt{\varrho \, \| \nabla y \|_{L^2(\Omega)}^2 +
          \| y \|_{L^2(\Omega)}^2}} \\
    & \geq & \frac{b(p,p)}{\sqrt{\varrho \, \| \nabla p \|_{L^2(\Omega)}^2 +
             \| p \|_{L^2(\Omega)}^2}} \\
    & \geq & \frac{\| \nabla p \|_{L^2(\Omega)}^2}
             {\sqrt{(\varrho+c_P^2) \, \| \nabla p \|_{L^2(\Omega)}^2}}
             = \frac{1}{\sqrt{\varrho+c_P^2}} \, \| \nabla p \|_{L^2(\Omega)}.  
  \end{eqnarray*}
  We have therefore derived the inf-sup condition
  \begin{equation}\label{Eqn:inf-sup B}
    c_S \, \| p \|_X \leq \sup_{0\neq y\in V}
    \frac{b(y,p)}{\| y \|_V} 
    \quad \text{for all } p\in X,
  \end{equation}
  with $c_S = (\varrho+c_P)^{-1/2}>0$, which also implies unique solvability
  of the system \eqref{Eqn:VF 1}-\eqref{Eqn:VF 2} by Brezzi's theorem
  \cite{Brezzi1974RAIRO}. In particular, this will be important for the
  well-posedness of the discrete problem.  
\end{remark}

\noindent
In Remark \ref{Lemma:inf-sup}, we have used the norm
estimate $\| p \|_V \leq (\varrho+c_P^2)^{1/2} \, \| p \|_X$ for all
$p \in X$. For $y \in V$, the opposite estimate is also valid:

\begin{lemma}
  For any $y \in V$, there holds
  \begin{equation}\label{Eqn:Norm X Y}
    \| y \|_X \, \leq \, \frac{1}{\sqrt{\varrho}} \, \| y \|_V \, .
  \end{equation}
\end{lemma}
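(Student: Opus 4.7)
The plan is simply to unpack the two norms and observe that the inequality follows by dropping a nonnegative term. From Remark~\ref{Lemma:inf-sup} one reads off that $\|p\|_X$ is identified with $\|\nabla p\|_{L^2(\Omega)}$; this expression makes sense on the larger space $V = H^1(\Omega)$ and is the natural extension of $\|\cdot\|_X$ to $V$ as a seminorm. With this convention in hand, the statement \eqref{Eqn:Norm X Y} is a one-line consequence of the definition of $\|\cdot\|_V$.

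Concretely, I would square both sides of \eqref{Eqn:Norm X Y} to reduce it to $\varrho \|\nabla y\|_{L^2(\Omega)}^2 \leq \|y\|_V^2$, and then simply write
\[
  \|y\|_V^2 = \|y\|_{L^2(\Omega)}^2 + \varrho \, \|\nabla y\|_{L^2(\Omega)}^2
  \, \geq \, \varrho \, \|\nabla y\|_{L^2(\Omega)}^2 \, = \, \varrho \, \|y\|_X^2,
\]
since $\|y\|_{L^2(\Omega)}^2 \geq 0$. Dividing by $\varrho > 0$ and taking square roots yields \eqref{Eqn:Norm X Y}. No inf-sup reasoning, Poincar\'e-type inequality, or regularity result is needed, and there is no real obstacle to the argument; the only point worth flagging is the mild abuse of notation in evaluating the $X$-norm on elements of the strictly larger space $V$, which is precisely the sense intended by the inequality.
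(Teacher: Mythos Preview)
Your proof is correct and is essentially identical to the paper's own argument: both simply observe that $\varrho\,\|\nabla y\|_{L^2(\Omega)}^2 \le \|y\|_{L^2(\Omega)}^2 + \varrho\,\|\nabla y\|_{L^2(\Omega)}^2 = \|y\|_V^2$ and divide by $\varrho$. Your remark about the mild abuse of notation in extending $\|\cdot\|_X$ to all of $V$ is also appropriate.
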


\begin{proof}
  For any $y \in Y$ we consider
  \[
    \| y \|_X^2 = \| \nabla y \|^2_{L^2(\Omega)} \, \leq \,
    \frac{1}{\varrho} \, \left( \| y \|_{L^2(\Omega)}^2 +
      \varrho \, \| \nabla y \|^2_{L^2(\Omega)} \right) \, = \,
    \frac{1}{\varrho} \, \| y \|_V^2 
  \]
  to conclude the assertion.
\end{proof}

\noindent
When considering the variational formulation \eqref{Eqn:VF 1}
for $y = q \in H^1_0(\Omega)$, this gives, using \eqref{Eqn:VF 2},
\[
  \int_\Omega \nabla p_\varrho(x) \cdot \nabla q(x) \, dx \\
  =
  \int_\Omega [\overline{y}(x) - y_\varrho(x)] \, q(x) \, dx,
\]
i.e., $p_\varrho \in H^1_0(\Omega)$ is the weak solution of the
Dirichlet boundary value problem
\begin{equation}\label{Eqn:Adjoint problem}
  - \Delta p_\varrho = \overline{y} - y_\varrho \quad \mbox{in} \; \Omega,
  \quad p_\varrho = 0 \quad \mbox{on} \; \Gamma .
\end{equation}
Now, considering \eqref{Eqn:VF 1} for an arbitrary
$y \in H^1(\Omega)\backslash H^1_0(\Omega)$, this gives
\begin{eqnarray*}
  \int_\Omega [\overline{y}(x) - y_\varrho(x)] \, y(x) \, dx
  & = & \int_\Omega \nabla y(x) \cdot \nabla p_\varrho(x) \, dx +
        \varrho \int_\Omega \nabla y_\varrho(x) \cdot \nabla y(x) \, dx \\
  && \hspace*{-4cm} = \, \int_\Gamma
     \frac{\partial}{\partial n_x} p_\varrho(x) \, y(x) \, ds_x +
     \int_\Omega [- \Delta p_\varrho(x)] \, y(x) \, dx +
     \varrho \int_\Gamma \frac{\partial}{\partial n_x}
     y_\varrho(x) \, y(x) \, ds_x 
\end{eqnarray*}
i.e.,
\begin{equation}\label{Eqn:gradient equation}
  \int_\Gamma
  \frac{\partial}{\partial n_x} p_\varrho(x) \, y(x) \, ds_x
  +
  \varrho \int_\Gamma \frac{\partial}{\partial n_x}
  y_\varrho(x) \, y(x) \, ds_x = 0 \quad
  \forall
  \; y \in H^1(\Omega \backslash H^1_0(\Omega)) .
\end{equation}
The optimality system consisting of the primal Dirichlet boundary value
problem \eqref{Eqn:Laplace DBVP}, the adjoint Dirichlet problem
\eqref{Eqn:Adjoint problem}, and \eqref{Eqn:gradient equation} was already
considered in \cite{OfPhanSteinbach2015NumerMath},
where a different approach in the mathematical and
numerical analysis was applied.

%
%

\section{Finite element discretization}\label{Sec:FED}
Recall the variational formulations \eqref{Eqn:VF 1} and \eqref{Eqn:VF 2}
to find $(y_\varrho,p_\varrho) \in V \times X$ such that
\begin{equation}\label{Eqn:VF 12}
  \begin{array}{lclcl}
    a(y_\varrho,y) + b(y,p_\varrho)
    & = & \langle \overline{y},y \rangle_{L^2(\Omega)}
    && \forall y \in V, \\[1mm]
    b(y_\varrho,q) & = & 0 & & \forall q \in X,
  \end{array}
\end{equation}
which is now discretized by means of the finite element method.
Let ${\mathcal{T}}_h = \{ \tau_\ell \}_{\ell=1}^n$ be a family of admissible
decompositions of $\Omega$ into shape regular simplicial finite elements
of mesh size $h$ which are assumed to be globally quasi-uniform;
$V_h := S_h^1(\Omega) = \mbox{span} \{ \varphi_k \}_{k=1}^M \subset H^1(\Omega)$
is the related space of piecewise linear and continuous basis functions
$\varphi_k$, and $X_h := S_h^1(\Omega) \cap H^1_0(\Omega) =
\mbox{span} \{ \varphi_k \}_{k=1}^N \subset H_0^1(\Omega)$,
where $M=M_h > N=N_h$.
We note that the basis functions $\varphi_{N+1},\ldots,\varphi_M$ belong to
the nodes located on the boundary $\partial \Omega$.
The Galerkin finite element discretization of \eqref{Eqn:VF 12}
is to find $(y_{\varrho h},p_{\varrho h}) \in V_h \times X_h$ such that
\begin{equation}\label{Eqn:FEM}
  \begin{array}{lclcl}
    a(y_{\varrho h},y_h) + b(y_h,p_{\varrho h})
    & = & \langle \overline{y},y_h \rangle_{L^2(\Omega)}
    && \forall y_h \in V_h, \\[1mm]
    b(y_{\varrho h},q_h) & = & 0 & & \forall q_h \in X_h.
  \end{array}
\end{equation}
We introduce the space 
\begin{equation*}
 Y_h = \Big \{ y_h \in V_h:\, b(y_h,q_h)=0,\, \forall q_h\in X_h\Big\}
\end{equation*}
of discrete harmonic functions.
Then, \eqref{Eqn:FEM} is equivalent to the variational formulation to
find $y_{\varrho h}\in Y_h$ such that  
\begin{equation}\label{Eqn:FEM equivalent}
  a(y_{\varrho h},y_h) =
  \langle \overline{y},y_h \rangle_{L^2(\Omega)}
  \quad \text{for all } y_h\in Y_h. 
\end{equation}
We note that, in this case, we cannot deduce unique solvability of
\eqref{Eqn:FEM equivalent}, as $Y_h \not \subset Y$ and we are hence
dealing with a non-conforming discretization. But the inf-sup stability
condition \eqref{Eqn:inf-sup B} remains true for all $p_h \in X_h$ when the
supremum is taken over all $y_h \in V_h$. Hence, by
Remark \ref{Lemma:inf-sup}, we conclude unique solvability of \eqref{Eqn:FEM}.
To derive a priori error estimates, we first follow the standard
approach in mixed finite element methods, e.g.,
\cite[Theorem 7.4.3]{QuarteroniValli1994Book}.

\begin{lemma}
  Let $(y_\varrho,p_\varrho) \in V \times X$ and
  $(y_{\varrho h},p_{\varrho h}) \in V_h \times X_h$ be the unique
  solutions of the variational formulations \eqref{Eqn:VF 12} and
  \eqref{Eqn:FEM}, respectively. Then 
  the error estimate
  \begin{equation}\label{Eqn:Cea}
    \| y_\varrho -y_{\varrho h}\|_V
    \, \leq \, 2 \,
    \| y_\varrho-y_h\|_V  +
    \frac{1}{\sqrt{\varrho}} \, \| p_\varrho-q_h \|_X
  \end{equation}
  holds for all $(y_h,q_h) \in Y_h \times X_h$.
\end{lemma}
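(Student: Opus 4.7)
The plan is to apply the triangle inequality $\|y_\varrho - y_{\varrho h}\|_V \leq \|y_\varrho - y_h\|_V + \|y_h - y_{\varrho h}\|_V$ for arbitrary $y_h \in Y_h$, reducing the task to bounding the purely discrete error $z_h := y_{\varrho h} - y_h \in Y_h$. The factor $\varrho^{-1/2}$ in front of the multiplier error will then appear through the norm comparison \eqref{Eqn:Norm X Y} between $\|\cdot\|_X$ and $\|\cdot\|_V$.

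To estimate $\|z_h\|_V$, I would combine the ellipticity identity $a(z_h, z_h) = \|z_h\|_V^2$ from \eqref{Eqn:Properties A} with the equivalent primal formulation \eqref{Eqn:FEM equivalent}. Testing \eqref{Eqn:FEM equivalent} with $z_h$ and subtracting $a(y_h, z_h)$ gives $\|z_h\|_V^2 = \langle \overline{y}, z_h \rangle_{L^2(\Omega)} - a(y_h, z_h)$. Because $V_h \subset V$, the continuous mixed equation \eqref{Eqn:VF 1} may be probed with $z_h$, yielding $\langle \overline{y}, z_h \rangle_{L^2(\Omega)} = a(y_\varrho, z_h) + b(z_h, p_\varrho)$. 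Putting the two together,
\[
  \|z_h\|_V^2 = a(y_\varrho - y_h, z_h) + b(z_h, p_\varrho) .
\]
The continuity bound in \eqref{Eqn:Properties A} controls the first term by $\|y_\varrho - y_h\|_V \|z_h\|_V$. For the second, I would use the discrete Galerkin orthogonality $b(z_h, q_h) = 0$ for every $q_h \in X_h$, which is valid precisely because $z_h \in Y_h$, to rewrite $b(z_h, p_\varrho) = b(z_h, p_\varrho - q_h)$. A Cauchy--Schwarz step on $b$ followed by \eqref{Eqn:Norm X Y} then produces $|b(z_h, p_\varrho - q_h)| \leq \varrho^{-1/2} \|z_h\|_V \|p_\varrho - q_h\|_X$. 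Dividing by $\|z_h\|_V$ and inserting into the initial triangle inequality yields \eqref{Eqn:Cea}.

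The main obstacle is the non-conformity $Y_h \not\subset Y$, which is exactly why a direct appeal to the standard Brezzi a priori estimate is unavailable: subtracting the continuous and discrete systems does not produce a clean Galerkin orthogonality for $b$ against arbitrary $q \in X$. The argument circumvents this by exploiting the asymmetry that the primal test space $V_h$ is conforming in $V$ (so the continuous equation may legitimately be tested with $z_h$) while discrete orthogonality is only available against $X_h$ in the multiplier slot; this is what forces the irreducible error term $\|p_\varrho - q_h\|_X$, weighted by the $\varrho$-dependent factor inherited from the passage between $\|\cdot\|_X$ and $\|\cdot\|_V$ in \eqref{Eqn:Norm X Y}.
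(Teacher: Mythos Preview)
Your proof is correct and follows essentially the same strategy as the paper: split via the triangle inequality through an arbitrary $y_h\in Y_h$, use the ellipticity of $a(\cdot,\cdot)$ on the discrete difference $z_h=y_{\varrho h}-y_h\in Y_h$, exploit $b(z_h,q_h)=0$ to replace $p_\varrho$ by $p_\varrho-q_h$, and convert $\|z_h\|_X$ to $\|z_h\|_V$ via \eqref{Eqn:Norm X Y}. The only cosmetic difference is that the paper begins from the mixed Galerkin orthogonality $a(y_\varrho-y_{\varrho h},y_h)+b(y_h,p_\varrho-p_{\varrho h})=0$ and then observes that the $p_{\varrho h}$ contribution vanishes when testing with $y_{\varrho h}-y_h^*\in Y_h$, whereas you bypass $p_{\varrho h}$ entirely by invoking the equivalent primal form \eqref{Eqn:FEM equivalent}; the resulting identity and estimates are identical.
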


\begin{proof}
  When subtracting the first equation in \eqref{Eqn:FEM} from the first
  equation in \eqref{Eqn:VF 12} for $y=y_h \in V_h \subset V$, this gives
  \[
    a(y_\varrho-y_{\varrho h},y_h) + b(y_h,p_\varrho-p_{\varrho h}) \, = \, 0
    \quad \forall y_h \in V_h .
  \]
   This is equivalent to
   \[
     a(y_{\varrho h}-y_h^*,y_h) + b(y_h,p_{\varrho h}-q_h) 
     \, = \,
     a(y_\varrho-y_h^*,y_h) + b(y_h,p_\varrho-q_h) \quad \forall y_h \in V_h,
   \]
   where $(y_h^*,q_h) \in Y_h \times X_h$ are arbitrary functions, i.e.,
   \[
     b(y_h^*,q_h) = 0 \quad \forall q_h \in X_h .
   \]
   For the particular test function $y_h = y_{\varrho h} - y_h^*$ we have
   $b(y_{\varrho h}-y_h^*,p_{\varrho h}-q_h) = 0$, and hence we conclude,
   using \eqref{Eqn:Norm X Y},
   \begin{eqnarray*}
     \| y_{\varrho h} - y_h^* \|_V^2
     & = & a(y_{\varrho h} - y_h^*,y_{\varrho h} - y_h^*) \\
     & = & a(y_\varrho-y_h^*,y_{\varrho h}-y_h^*) +
           b(y_{\varrho h}-y_h^*,p_\varrho-q_h) \\
     & = & \| y_\varrho-y_h^*\|_V \| y_{\varrho h}-y_h^*\|_V +
           \| y_{\varrho h}-y_h^* \|_X \| p_\varrho-q_h \|_X \\
     & = & \left[ \| y_\varrho-y_h^*\|_V  +
           \frac{1}{\sqrt{\varrho}} \, \| p_\varrho-q_h \|_X
           \right] \, \| y_{\varrho h}-y_h^*\|_V,
   \end{eqnarray*}
  and the assertion follows from the triangle inequality.
\end{proof}

\begin{lemma}
  Assume that $\Omega$ is either a convex Lipschitz domain, or a bounded
  domain with smooth boundary.
  Let $(y_\varrho,p_\varrho) \in V \times X$ be the unique solution of the
  variational formulation \eqref{Eqn:VF 12}. Then $p_\varrho\in H^2(\Omega)$ and
  \begin{equation}\label{Eqn:Error p}
    \inf_{q_h\in X_h}\| p_\varrho - q_h \|_X \leq \norm{p_\varrho-I_h p_\varrho}_X\leq 
    c \, h \, \| y_\varrho - \overline{y} \|_{L^2(\Omega)} .
  \end{equation}
\end{lemma}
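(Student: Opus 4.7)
The plan is to identify $p_\varrho$ through the adjoint problem \eqref{Eqn:Adjoint problem} that has already been derived by testing \eqref{Eqn:VF 1} with $y = q \in H^1_0(\Omega)$ and using \eqref{Eqn:VF 2}. Thus $p_\varrho \in H^1_0(\Omega)$ is the unique weak solution of the Poisson problem with right-hand side $\overline{y} - y_\varrho \in L^2(\Omega)$ and homogeneous Dirichlet data. Under either of the two domain assumptions in the statement, the standard $H^2$-regularity result for the Dirichlet Laplacian (Grisvard's result in the convex Lipschitz case, classical elliptic regularity in the smooth case) gives $p_\varrho \in H^2(\Omega)$ together with
\[
  \| p_\varrho \|_{H^2(\Omega)} \leq c \, \| \overline{y} - y_\varrho \|_{L^2(\Omega)} .
\]

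Next I would apply the nodal Lagrange interpolant $I_h : C(\overline{\Omega}) \to X_h$, which is well defined on $H^2(\Omega)$ in dimensions $d=2,3$ by Sobolev embedding and which preserves the homogeneous boundary condition because $p_\varrho$ vanishes on $\Gamma$ and the nodes on $\partial \Omega$ carry the basis functions excluded from $X_h$. Using the shape regularity and quasi-uniformity of $\mathcal{T}_h$, the classical local interpolation error estimate summed over the elements yields
\[
  \| p_\varrho - I_h p_\varrho \|_X
  = \| \nabla (p_\varrho - I_h p_\varrho) \|_{L^2(\Omega)}
  \leq c \, h \, | p_\varrho |_{H^2(\Omega)} .
\]

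Chaining the two displays, and bounding the infimum by the value attained at $q_h = I_h p_\varrho$, gives the stated estimate \eqref{Eqn:Error p}. The whole argument is essentially three ingredients glued together: the characterisation of $p_\varrho$ as the solution of an adjoint Poisson problem with $L^2$ right-hand side, elliptic $H^2$-regularity, and standard nodal interpolation. The only step that is not entirely routine is the $H^2$-regularity, since one has to cover both the convex Lipschitz and the smooth boundary cases; everything else is standard finite element bookkeeping, and in particular one does not need to touch the mixed structure \eqref{Eqn:VF 12} once $p_\varrho$ has been isolated.
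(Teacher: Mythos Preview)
Your proposal is correct and follows essentially the same approach as the paper: identify $p_\varrho$ as the weak solution of the adjoint Poisson problem \eqref{Eqn:Adjoint problem}, invoke $H^2$-regularity under the convex or smooth-boundary assumption to bound $|p_\varrho|_{H^2(\Omega)}$ by $\|y_\varrho-\overline{y}\|_{L^2(\Omega)}$, and then apply the standard nodal interpolation estimate in the $H^1$-seminorm. If anything, you have supplied slightly more detail (Sobolev embedding for $I_h$, preservation of the homogeneous trace) than the paper's own proof.
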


\begin{proof}
  For the solution $p_\varrho \in H^1_0(\Omega)$ of the adjoint problem
  \eqref{Eqn:Adjoint problem} for
  $\overline{y}-y_\varrho \in L^2(\Omega)$ we conclude
  $p_\varrho \in H^2(\Omega)$, due to the assumptions made on $\Omega$. 
  Using standard interpolation error estimates, 
  we get
  \[
    \| p_\varrho - I_h p_\varrho \|_X =
    \| \nabla ( p_\varrho - I_h p_\varrho) \|_{L^2(\Omega)} \leq
    c \, h \, |p_\varrho|_{H^2(\Omega)} \leq
    c \, h \, \| y_\varrho - \overline{y} \|_{L^2(\Omega)} .
  \]
\end{proof}

\begin{remark}
  The assumption that $\Omega$ is convex or smoothly bounded is needed, in
  order to guarantee that $p_\varrho\in H^2(\Omega)$ for all
  $\overline{y} \in L^2(\Omega)$. If $\Omega$ is an arbitrary Lipschitz domain we can at least expect $p_\varrho\in H^{3/2}(\Omega)$, see \cite[Cor. 3.7 (ii)]{BehrndtGesztesyMitrea2025}. 
\end{remark}

\begin{lemma}
  Let $(y_\varrho,p_\varrho) \in V \times X$ be the unique solution of the
  variational formulation \eqref{Eqn:VF 12}, and assume
  $\overline{y} \in Y \cap H^2(\Omega)$. Then
  there holds the error estimate
  \begin{equation}\label{Eqn:Error yrho}
    \inf\limits_{y_h^* \in Y_h}
    \| y_\varrho - y_h^* \|_V \leq c \, 
    \sqrt{h^4 + \varrho h^2 + \varrho^2} \, \| \overline{y} \|_{H^2(\Omega)} =
    \widetilde{c} \, h^2 \, \| \overline{y} \|_{H^2(\Omega)},
  \end{equation}
  when choosing $\varrho = h^2$ in the case of a smooth boundary, and 
  \begin{eqnarray}
    \inf\limits_{y_h^* \in Y_h}
    \| y_\varrho - y_h^* \|_V
    &\leq & c \, \sqrt{h^4 + \varrho h^2 + \varrho^2(1+|\log(\varrho)|)^2} \,
            \| \overline{y} \|_{H^2(\Omega)} \nonumber \\
    & = & \label{Eqn:Error yrho p.w. 1}
    \widetilde{c} \, h^2 \, \| \overline{y} \|_{H^2(\Omega)},
  \end{eqnarray}
  choosing $\varrho = h^2/|\log(h)|$ in the case of a domain with piecewise
  smooth boundary.
\end{lemma}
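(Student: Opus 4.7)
The plan is to insert the target $\overline{y}$ and split via triangle inequality,
\[
  \inf_{y_h^*\in Y_h}\|y_\varrho-y_h^*\|_V \;\leq\; \|y_\varrho-\overline{y}\|_V + \inf_{y_h^*\in Y_h}\|\overline{y}-y_h^*\|_V,
\]
so that the first contribution is controlled by the regularization estimates already proved, while the second is a pure finite element approximation of the harmonic function $\overline{y}$ by discrete harmonic functions. Expanding $\|\cdot\|_V^2=\|\cdot\|_{L^2(\Omega)}^2+\varrho\|\nabla\cdot\|_{L^2(\Omega)}^2$ and substituting Lemma~\ref{Lemma:Regularization error smooth} immediately gives $\|y_\varrho-\overline{y}\|_V\leq c\,\varrho\,\|\overline{y}\|_{H^2(\Omega)}$ in the smooth case, and Lemma~\ref{Lemma:Regularization error} gives the same bound multiplied by $(1+|\log\varrho|)$ in the 2D piecewise smooth case.

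For the approximation term, I construct $y_h^*\in Y_h$ as the discrete harmonic component of a suitable finite element approximation $\tilde y_h\in V_h$ of $\overline{y}$: set $y_h^*:=\tilde y_h-r_h$, where $r_h\in X_h$ solves
\[
  \int_\Omega \nabla r_h\cdot\nabla v_h\,dx \;=\; \int_\Omega \nabla \tilde y_h\cdot\nabla v_h\,dx \qquad \forall\,v_h\in X_h.
\]
Since $\overline{y}$ is harmonic, $\int_\Omega \nabla\overline{y}\cdot\nabla v_h\,dx=0$ for all $v_h\in X_h$, so the defining relation rewrites as $\int \nabla r_h\cdot\nabla v_h = \int \nabla(\tilde y_h-\overline{y})\cdot\nabla v_h$, and the standard energy estimate together with interpolation bounds for $\tilde y_h$ yields $\|\nabla r_h\|_{L^2(\Omega)}\leq ch\|\overline{y}\|_{H^2(\Omega)}$ and, via the triangle inequality, $\|\nabla(\overline{y}-y_h^*)\|_{L^2(\Omega)}\leq ch\|\overline{y}\|_{H^2(\Omega)}$. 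An Aubin--Nitsche duality argument based on the adjoint Dirichlet problem $-\Delta\phi=g$ in $\Omega$, $\phi=0$ on $\Gamma$, whose $H^2$-regularity is guaranteed by the same hypothesis on $\Omega$ that was used to prove $p_\varrho\in H^2(\Omega)$, then produces $\|\overline{y}-y_h^*\|_{L^2(\Omega)}\leq ch^2\|\overline{y}\|_{H^2(\Omega)}$. Altogether $\|\overline{y}-y_h^*\|_V^2\leq c(h^4+\varrho h^2)\|\overline{y}\|_{H^2(\Omega)}^2$.

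Combining the two contributions yields $c\sqrt{h^4+\varrho h^2+\varrho^2}\,\|\overline{y}\|_{H^2(\Omega)}$, and its logarithmic analogue $c\sqrt{h^4+\varrho h^2+\varrho^2(1+|\log\varrho|)^2}\,\|\overline{y}\|_{H^2(\Omega)}$ in the piecewise smooth case. The choice $\varrho=h^2$ makes all three summands of order $h^4$, giving the advertised rate $\widetilde c\,h^2\|\overline{y}\|_{H^2(\Omega)}$; in 2D with piecewise smooth boundary, $\varrho=h^2/|\log h|$ balances $\varrho^2(1+|\log\varrho|)^2$ against $h^4$, giving the same rate. The technically delicate step is the Aubin--Nitsche bound, since $\overline{y}-y_h^*$ has non-zero boundary trace: the emerging boundary residual $\int_\Gamma(\overline{y}-y_h^*)\partial_n\phi\,ds$ would be only $O(h^{3/2})$ if $\tilde y_h|_\Gamma$ were the mere nodal boundary interpolant of the $H^{3/2}(\Gamma)$-trace $\overline{y}|_\Gamma$. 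This is circumvented by choosing $\tilde y_h|_\Gamma$ as an $L^2(\Gamma)$-projection (or a Scott--Zhang type interpolation) of $\overline{y}|_\Gamma$ onto $V_h|_\Gamma$, so that the boundary residual becomes $L^2(\Gamma)$-orthogonal to $V_h|_\Gamma$ and the missing half power of $h$ is recovered; all remaining ingredients are standard FE machinery under the posed regularity of $\Omega$.
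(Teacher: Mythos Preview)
Your approach is correct and coincides with the paper's: insert $\overline{y}$ and split by the triangle inequality, bound $\|y_\varrho-\overline{y}\|_V$ by the regularization estimates of Lemmas~\ref{Lemma:Regularization error smooth} and~\ref{Lemma:Regularization error}, and bound $\|\overline{y}-y_h^*\|_V$ by finite element approximation of the harmonic target through a discrete harmonic $y_h^*\in Y_h$. The paper compresses the second step into the single phrase ``standard error estimates for the finite element solution $y_h^*\in V_h$ of $-\Delta\overline{y}=0$'' to obtain $\|\overline{y}-y_h^*\|_{L^2(\Omega)}^2+\varrho\|\nabla(\overline{y}-y_h^*)\|_{L^2(\Omega)}^2\leq c(h^4+\varrho h^2)|\overline{y}|_{H^2(\Omega)}^2$, whereas you spell out the discrete harmonic construction and the Aubin--Nitsche argument and correctly flag that the boundary residual $\int_\Gamma(\overline{y}-y_h^*)\partial_n\phi\,ds$ forces an $L^2(\Gamma)$-projection of the boundary datum to recover the full $O(h^2)$ rate---a subtlety the paper leaves implicit in ``standard''.
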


\begin{proof}
  When using the triangle inequality, the results of
  Lemma \ref{Lemma:Regularization error smooth}, and standard error estimates
  for the finite element solution $y_h^* \in V_h$ of
  $- \Delta \overline{y}=0$ in $\Omega$, this gives
  \begin{eqnarray*}
    \| y_\varrho - y_h^* \|_V^2
    & \leq & 2 \, \| y_\varrho - \overline{y} \|_V^2 +
             2 \, \| \overline{y} - y_h^* \|_V^2 \\
    & = & 2 \Big[ \| y_\varrho - \overline{y} \|_{L^2(\Omega)}^2 +
          \varrho \, \| \nabla (y_\varrho - \overline{y}) \|^2_{L^2(\Omega)}
          \Big] \\
    && \hspace*{3cm} +
          2 \Big[ \| \overline{y} - y_h^* \|_{L^2(\Omega)}^2 +
          \varrho \, \| \nabla (\overline{y}-y_h^*) \|^2_{L^2(\Omega)}
       \Big] \\
    & \leq & c_1 \, \varrho^2 \, \| \overline{y} \|_{H^2(\Omega)}^2
             + c_2 \,
             \Big[ h^4 + \varrho \, h^2 \Big] \, |\overline{y}|_{H^2(\Omega)}^2 .
  \end{eqnarray*}
  The estimate \eqref{Eqn:Error yrho p.w. 1} is derived in the same way
  using the results of Lemma \ref{Lemma:Regularization error} which give
  \begin{equation*}
    \| y_{\varrho h}-\overline{y} \|_{L^2(\Omega)}^2 \leq
    c \, \Big[ h^4 + \varrho \, h^2 + \varrho^2 \, (1+|\log(\varrho)|)^2
    \Big] \, \| \overline{y} \|_{H^2(\Omega)}^2. 
  \end{equation*}
  Now we compute 
  \begin{eqnarray*}
    h^4 + \varrho \, h^2 + \varrho^2 \, (1+|\log(\varrho)|)^2
    & = & h^4 + \varrho \, h^2 + \varrho^2 + 2 \, \varrho^2 \,
          |\log(\varrho)| + (\varrho \, |\log(\varrho)|)^2 \\
    & = & h^4 + \varrho \, h^2 + \varrho^2 \, (1+ 2 \, |\log(\varrho)| +
          |\log(\varrho)|^2) .
  \end{eqnarray*}
  If $\varrho = h^2/|\log(h)|$  and $h$ is small, we get
  \[
    1 + 2\, |\log(\varrho)| + |\log(\varrho)|^2 \simeq
    |\log(\varrho)|^2 \simeq 4 \, |\log(h)|^2,
  \]
  and  
  \begin{eqnarray*}
    h^4 + \varrho \, h^2 + \varrho^2 \,
    (1+ 2 \, \log(\varrho) + |\log(\varrho)|^2)
    & \simeq & h^4 + \frac{h^4}{|\log(h)|} +
               \frac{h^4}{|\log(h)|^2} \, 4 \, |\log(h)|^2 \\
    & \simeq & h^4.
  \end{eqnarray*}
\end{proof}

\noindent
Now we are in the position to formulate the main result of this section:

\begin{theorem}\label{Theorem:DiscretizationErrorEstimate} 
  Let $y_{\varrho h} \in V_h$ be the unique solution of the variational
  formulation \eqref{Eqn:FEM}. Assume that $\Omega$ is a bounded domain
  with a smooth boundary, and assume $\overline{y} \in Y \cap H^2(\Omega)$.
  Then there holds the error estimate
  \begin{equation}\label{Eqn:Error regular}
    \| y_{\varrho h} - \overline{y} \|_{L^2(\Omega)} \leq
    c \, \sqrt{h^4 + \varrho h^2 + \varrho^2} \,
    \| \overline{y} \|_{H^2(\Omega)} =
    \widetilde{c} \, h^2 \, \| \overline{y} \|_{H^2(\Omega)},
  \end{equation}
  when choosing $\varrho = h^2$. If the boundary is piecewise smooth, we get 
  \begin{equation*}
    \| y_{\varrho h} - \overline{y} \|_{L^2(\Omega)} \leq
    c \, \sqrt{h^4 + \varrho h^2 + \varrho^2(1+|\log(\varrho)|)^2} \,
    \| \overline{y} \|_{H^2(\Omega)} \leq \widetilde{c} \, h^2 \,
    \|\overline{y}\|_{H^2(\Omega)},
  \end{equation*}
  when choosing $\varrho = h^2/|\log(h)|$.
\end{theorem}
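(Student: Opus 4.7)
The plan is to split via the triangle inequality
\[
  \| y_{\varrho h} - \overline{y} \|_{L^2(\Omega)}
  \leq \| y_{\varrho h} - y_\varrho \|_{L^2(\Omega)}
  + \| y_\varrho - \overline{y} \|_{L^2(\Omega)}
\]
and treat the two pieces separately. The second (regularization) term is handled directly by Lemma~\ref{Lemma:Regularization error smooth} in the smooth case and Lemma~\ref{Lemma:Regularization error} in the piecewise smooth case, giving a bound of $c\varrho \|\overline{y}\|_{H^2(\Omega)}$, respectively $c\varrho(1+|\log\varrho|)\|\overline{y}\|_{H^2(\Omega)}$. For the first (discretization) term I would bound $\|y_{\varrho h}-y_\varrho\|_{L^2(\Omega)}$ from above by $\|y_{\varrho h}-y_\varrho\|_V$, since the $V$-norm contains the full $L^2$-norm as one of its summands.

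Next I would apply the abstract error estimate \eqref{Eqn:Cea}, taking the infimum over $(y_h^*,q_h)\in Y_h\times X_h$, and insert the two approximation results already established: \eqref{Eqn:Error p} for the dual component, and \eqref{Eqn:Error yrho} (or \eqref{Eqn:Error yrho p.w. 1} in the piecewise smooth case) for the primal one. This yields
\[
  \| y_{\varrho h} - y_\varrho \|_V
  \leq c\sqrt{h^4+\varrho h^2 + \varrho^2}\,\|\overline{y}\|_{H^2(\Omega)}
  + \frac{c\,h}{\sqrt{\varrho}}\,\|y_\varrho-\overline{y}\|_{L^2(\Omega)}
\]
in the smooth case, and analogously with the logarithmic factor attached to $\varrho^2$ in the piecewise smooth case. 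The factor $\|y_\varrho-\overline y\|_{L^2(\Omega)}$ appearing through \eqref{Eqn:Error p} is again controlled by the regularization lemmas, so that the second summand becomes $c\,h\sqrt{\varrho}\,\|\overline{y}\|_{H^2(\Omega)}$ (up to a logarithmic factor in the piecewise smooth case), which is absorbed into $\sqrt{h^4+\varrho h^2+\varrho^2}$.

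Combining both pieces of the triangle inequality, and noting that $\varrho\le\sqrt{h^4+\varrho h^2+\varrho^2}$ so that the regularization contribution is also absorbed, I obtain the stated estimate
\[
  \| y_{\varrho h}-\overline{y}\|_{L^2(\Omega)}
  \leq c\sqrt{h^4+\varrho h^2+\varrho^2}\,\|\overline{y}\|_{H^2(\Omega)},
\]
and analogously with the logarithmic factor in the piecewise smooth case. Finally, the balanced choice $\varrho=h^2$ (smooth) or $\varrho=h^2/|\log h|$ (piecewise smooth) makes all three contributions $h^4$, $\varrho h^2$, and $\varrho^2(1+|\log\varrho|)^2$ simultaneously of order $h^4$, as already verified in the proof of the preceding lemma, yielding the asymptotic $h^2$ rate.

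There is no substantive obstacle here: the real analytical work has been done in Lemmas~\ref{Lemma:Regularization error smooth} and \ref{Lemma:Regularization error} and in the preceding a priori estimates. The main care needed is bookkeeping, namely tracking the factor $1/\sqrt{\varrho}$ produced by \eqref{Eqn:Cea} and checking that it is compensated by $h\,\|y_\varrho-\overline{y}\|_{L^2(\Omega)}$ rather than causing a blow-up, and verifying that the additional logarithmic factors in the piecewise smooth case still collapse to $h^4$ under the chosen balancing of $\varrho$ and $h$.
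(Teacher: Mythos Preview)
Your proposal is correct and follows essentially the same route as the paper: split via the triangle inequality into a discretization part bounded by $\|y_{\varrho h}-y_\varrho\|_V$ and a regularization part, apply C\'ea's lemma \eqref{Eqn:Cea} together with \eqref{Eqn:Error p} and \eqref{Eqn:Error yrho} (resp.\ \eqref{Eqn:Error yrho p.w. 1}) for the first part, and invoke Lemma~\ref{Lemma:Regularization error smooth} (resp.\ Lemma~\ref{Lemma:Regularization error}) for the second as well as to absorb the $h/\sqrt{\varrho}$ factor. The paper merely phrases the computation with squared norms and $(a+b)^2\le 2(a^2+b^2)$ instead of the plain triangle inequality, which is cosmetic.
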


\begin{proof}
  When using the simple estimate $(a+b)^2 \leq 2(a^2+b^2)$,
  Cea's lemma \eqref{Eqn:Cea} for $q_h = I_h p_\varrho$ and
  $y_h^* \in V_h$ as finite element approximation of $\overline{y}$,
  the regularization error estimate
  \eqref{Eqn:Regularization error regular smooth},
  the error estimates \eqref{Eqn:Error yrho} and \eqref{Eqn:Error p},
  this gives
  \begin{eqnarray*}
    \| y_{\varrho h} - \overline{y} \|^2_{L^2(\Omega)}
    & \leq & 2 \, \| y_{\varrho h} - y_\varrho \|_{L^2(\Omega)}^2 +
             2 \, \| y_\varrho - \overline{y} \|_{L^2(\Omega)}^2 \\
    && \hspace*{-2.5cm} \leq \, 2 \, \left( 2 \, \| y_\varrho - y_h^* \|_V +
             \frac{1}{\sqrt{\varrho}} \,
             \| p_\varrho - I_h p_\varrho \|_X \right)^2
             + c_1 \, \varrho^2 \, \| \overline{y} \|^2_{H^2(\Omega)} \\
    && \hspace*{-2.5cm}
       \leq \, 8 \, \| y_\varrho - y_h^* \|^2_V +
             \frac{4}{\varrho} \,
             \| p_\varrho - I_h p_\varrho \|_X^2 
             + c_1 \, \varrho^2 \, \| \overline{y} \|^2_{H^2(\Omega)} \\
    && \hspace*{-2.5cm}
       \leq \, c_2 \Big[ h^4 + \varrho h^2 + \varrho^2 \Big]
             \, \| \overline{y} \|^2_{H^2(\Omega)} +
             \frac{4}{\varrho} \, c_3 \, h^2 \,
             \underbrace{\| y_\varrho - \overline{y} \|_{L^2(\Omega)}^2
             }_{\leq c_4 \varrho^2 \| \overline{y} \|_{H^2(\Omega)}^2}
             + c_1 \, \varrho^2 \,
             \| \overline{y} \|^2_{H^2(\Omega)} \\[-2mm]
    && \hspace*{-2.5cm}
       \leq \, c \, \Big[ h^4 + \varrho h^2 + \varrho^2 \Big] \,
             \| \overline{y} \|^2_{H^2(\Omega)},
  \end{eqnarray*}
  i.e., the assertion. In the case of a piecewise smooth boundary we follow
  the same lines but replace \eqref{Eqn:Regularization error regular smooth}
  by \eqref{Eqn:Regularization error pw 1} and \eqref{Eqn:Error yrho} by
  \eqref{Eqn:Error yrho p.w. 1}.
\end{proof}

\noindent
Next we consider the situation when the target $\overline{y}$ is less regular:

\begin{theorem}
  Let $y_{\varrho h} \in V_h$ be the unique solution of the variational
  formulation \eqref{Eqn:FEM}. Assume that $\Omega$ is either a convex Lipschitz domain, or a bounded domain with smooth boundary. For $\overline{y} \in Y$ there holds the error estimate
  \begin{equation}\label{Eqn:Error Y}
    \| y_{\varrho h} - \overline{y} \|_{L^2(\Omega)} \leq
    c \, \sqrt{h^2 + \varrho} \, \| \nabla \overline{y} \|_{L^2(\Omega)} =
    \widetilde{c} \, h \, \| \nabla \overline{y} \|_{L^2(\Omega)},
  \end{equation}
  when choosing $\varrho = h^2$, while for $\overline{y} \in L^2(\Omega)$
  we have
   \begin{equation}\label{Eqn:Error L2}
    \| y_{\varrho h} - \overline{y} \|_{L^2(\Omega)} \leq
    c \, \sqrt{1 + h^2 \varrho^{-1}} \, \| \overline{y} \|_{L^2(\Omega)} =
    \widetilde{c} \, \| \overline{y} \|_{L^2(\Omega)}.
  \end{equation}
\end{theorem}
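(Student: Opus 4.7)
The plan is to mimic the structure of the proof of Theorem \ref{Theorem:DiscretizationErrorEstimate}: split the total error by the triangle inequality into a discretization contribution and a regularization contribution,
\[
  \| y_{\varrho h} - \overline{y} \|^2_{L^2(\Omega)}
  \, \leq \,
  2 \, \| y_{\varrho h} - y_\varrho \|_V^2
  +
  2 \, \| y_\varrho - \overline{y} \|^2_{L^2(\Omega)} ,
\]
using that $\| v \|_{L^2(\Omega)} \leq \| v \|_V$. The first term is controlled by C\'ea's lemma \eqref{Eqn:Cea}, where the dual contribution is always handled by choosing $q_h = I_h p_\varrho$ and invoking \eqref{Eqn:Error p}. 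The second term is bounded by the appropriate regularization error estimates of Section \ref{Sec:BOCP}. The regularity-specific ingredient is the choice of $y_h^\ast \in Y_h$.

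For $\overline{y} \in Y$, I would use \eqref{Eqn:Regularization error Y} and \eqref{Eqn:Bound Y} to get $\| y_\varrho - \overline{y} \|_V^2 \leq 2 \varrho \, \| \nabla \overline{y} \|^2_{L^2(\Omega)}$, and, via \eqref{Eqn:Error p}, $\| p_\varrho - I_h p_\varrho \|_X \leq c \, h \, \varrho^{1/2} \, \| \nabla \overline{y} \|_{L^2(\Omega)}$. As $y_h^\ast \in Y_h$ I take the discrete harmonic Galerkin approximation of $\overline{y}$, for which $H^1$-stability immediately gives $\| \nabla (\overline{y} - y_h^\ast) \|_{L^2(\Omega)} \leq c \, \| \nabla \overline{y} \|_{L^2(\Omega)}$, while an Aubin--Nitsche duality argument, justified by the $H^2$ regularity of the auxiliary Poisson problem on $\Omega$ (guaranteed by the assumption on $\Omega$), yields $\| \overline{y} - y_h^\ast \|_{L^2(\Omega)} \leq c \, h \, \| \nabla \overline{y} \|_{L^2(\Omega)}$. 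Combining these via the triangle inequality gives $\| y_\varrho - y_h^\ast \|_V^2 \leq c \, (h^2 + \varrho) \, \| \nabla \overline{y} \|^2_{L^2(\Omega)}$, and plugging everything into \eqref{Eqn:Cea} together with the initial splitting produces \eqref{Eqn:Error Y}.

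For $\overline{y} \in L^2(\Omega)$ no positive approximation rate for $y_\varrho$ in $Y_h$ is available, so I would simply take $y_h^\ast = 0 \in Y_h$. The a priori bound \eqref{Eqn:Bound L2} gives $\| y_\varrho \|_V^2 \leq 2 \, \| \overline{y} \|^2_{L^2(\Omega)}$, while \eqref{Eqn:Error p} combined with \eqref{Eqn:Regularization error L2} yields $\| p_\varrho - I_h p_\varrho \|_X \leq c \, h \, \| \overline{y} \|_{L^2(\Omega)}$. C\'ea's lemma \eqref{Eqn:Cea} then produces $\| y_{\varrho h} - y_\varrho \|_V \leq c \, (1 + h \varrho^{-1/2}) \, \| \overline{y} \|_{L^2(\Omega)}$, which together with \eqref{Eqn:Regularization error L2} establishes \eqref{Eqn:Error L2}.

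The main subtlety sits in the $Y$-case: one needs an $L^2$-approximation of order $h$ for a merely $H^1$-regular harmonic function by an element of $Y_h$. This is exactly where the assumption on $\Omega$ enters, through the $H^2$ regularity of the dual Poisson problem used in the Aubin--Nitsche step; without it both the $L^2$-approximation of $\overline{y}$ in $Y_h$ and the interpolation estimate \eqref{Eqn:Error p} would deteriorate, producing at best an $h^{1/2}$ rate in \eqref{Eqn:Error Y}. The $L^2$-case is by contrast essentially a stability statement and presents no difficulty beyond the bookkeeping above.
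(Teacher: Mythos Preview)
Your proof is correct and follows essentially the same strategy as the paper: split into discretization and regularization error, use C\'ea's lemma \eqref{Eqn:Cea} with $q_h = I_h p_\varrho$ via \eqref{Eqn:Error p}, and feed in the appropriate bounds from Section~\ref{Sec:BOCP}. The only differences are in the choice of $y_h^\ast$: for $\overline{y}\in Y$ the paper takes the discrete harmonic approximation of $y_\varrho$ (then bounds $\|\nabla y_\varrho\|_{L^2(\Omega)}$ by \eqref{Eqn:Bound Y}) rather than of $\overline{y}$, which is a cosmetic difference requiring the same Aubin--Nitsche step; for $\overline{y}\in L^2(\Omega)$ the paper again uses the discrete harmonic approximation of $y_\varrho$ together with \eqref{Eqn:Bound L2}, whereas your choice $y_h^\ast=0$ is a legitimate simplification that avoids any approximation argument and yields the same final bound.
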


\begin{proof}
  For $\overline{y} \in Y$ and using \eqref{Eqn:Error p} and
  \eqref{Eqn:Regularization error Y} we first have
  \[
    \| p_\varrho - I_h p_\varrho \|_X \leq
    c \, h \, \| y_\varrho - \overline{y} \|_{L^2(\Omega)} \leq
    c \, h \, \varrho^{1/2} \, \| \nabla \overline{y} \|_{L^2(\Omega)} .
  \]
  For $y_h^* \in V_h$ being the finite element approximation of
  $y_\varrho \in Y$ we now have, using standard finite element error
  estimates and \eqref{Eqn:Regularization error Y},
  \begin{eqnarray*}
    \| y_\varrho - y_h^* \|_V^2
    & = & \| y_\varrho - y_h^* \|_{L^2(\Omega)}^2 + \varrho \,
          \| \nabla (y_\varrho - y_h^*) \|^2_{L^2(\Omega)} \\
    & \leq & c \, \Big[ h^2 + \varrho \Big] \,
             \| \nabla y_\varrho \|^2_{L^2(\Omega)} \, \leq \,
             c \, \Big[ h^2 + \varrho \Big] \,
             \| \nabla \overline{y} \|^2_{L^2(\Omega)},
  \end{eqnarray*}
  Cea's lemma \eqref{Eqn:Cea} now gives
  \[
    \| y_\varrho - y_{\varrho h} \|^2_{L^2(\Omega)}
    \leq 4 \, \| y_\varrho - y_h^* \|_V^2 + \frac{2}{\varrho} \,
    \| p_\varrho - I_h p_\varrho \|_X^2 \leq
    c \, \Big[ h^2 + \varrho \Big] \,
    \| \nabla \overline{y} \|^2_{L^2(\Omega)} .
  \]
  Hence, and using \eqref{Eqn:Regularization error Y} we
  obtain \eqref{Eqn:Error Y},
  \[
    \| y_{\varrho h} - \overline{y} \|_{L^2(\Omega)}^2
    \leq 2 \, \| y_{\varrho h} - y_\varrho \|^2_{L(\Omega)} +
    2 \, \| y_\varrho - \overline{y} \|^2_{L^2(\Omega)} \\
    \leq c \, \Big[ h^2 + \varrho \Big] \,
    \| \nabla \overline{y} \|^2_{L^2(\Omega)} .
  \]
  For $\overline{y} \in L^2(\Omega)$ we conclude,
  now using \eqref{Eqn:Regularization error L2},
   \[
    \| p_\varrho - I_h p_\varrho \|_X \leq
    c \, h \, \| y_\varrho - \overline{y} \|_{L^2(\Omega)} \leq
    c \, h \, \| \overline{y} \|_{L^2(\Omega)} ,
  \]
  and
  \[
    \| y_\varrho - y_h^* \|_V^2 \leq c \, \Big[ h^2 + \varrho \Big] \,
    \| \nabla y_\varrho \|^2_{L^2(\Omega)}
    \leq c \, \Big[ h^2 + \varrho \Big] \, \varrho^{-1} \,
    \| \overline{y} \|^2_{L^2(\Omega)} .
  \]
  With this, \eqref{Eqn:Error L2} follows.
\end{proof}

%
%

\section{Solver}
\label{Sec:Solver}
Once the basis is introduced, the mixed finite element scheme \eqref{Eqn:FEM}
can be rewritten as the following symmetric, but indefinite linear system of
algebraic equations: Find  $\mathbf{y}_h \in \mathbb{R}^M$ and
$\mathbf{p}_h \in \mathbb{R}^N$ such that
\begin{equation}\label{Eqn:SID-System}
  \left(
    \begin{array}{cc}
      \mathbf{M}_h + \varrho \mathbf{K}_h & \widetilde{\mathbf{K}}_h^\top \\[1mm]
      \widetilde{\mathbf{K}}_h & \mathbf{0}_h
    \end{array}
  \right)
  \left(
    \begin{array}{c}
      \mathbf{y}_h \\[1mm]
      \mathbf{p}_h
    \end{array}
  \right)
  =
  \left(
    \begin{array}{c}
      \overline{\mathbf{y}}_h \\[1mm]
      \mathbf{0}_h
    \end{array}
  \right),
\end{equation}
where $\mathbf{M}_h = (\langle \varphi_j,\varphi_i \rangle_{L^2(\Omega)}
)_{i,j=1,\ldots,M}$ denotes the $M \times M$ symmetric and positive definite
mass matrix, $\mathbf{K}_h = (\langle \nabla \varphi_j,  \nabla \varphi_i
\rangle_{L^2(\Omega)})_{i,j=1,\ldots,M}$ is the $M \times M$ symmetric, but
singular Neumann stiffness matrix, $\widetilde{\mathbf{K}}_h =
(\langle \nabla\varphi_j,  \nabla \varphi_i \rangle_{L^2(\Omega)}
)_{i=1,\ldots,N,j=1,\ldots,M}$ is a rectangular matrix of the dimension
$N \times M$, and $\overline{\mathbf{y}}_h =
(\langle \overline{y},\varphi_i \rangle_{L^2(\Omega)})_{i=1,\ldots,M} \in
\mathbb{R}^M$ is computed from the given target $\overline{y}$. The solution
vectors $\mathbf{y}_h$ and $\mathbf{p}_h$ of \eqref{Eqn:SID-System} 
are related to the solutions $y_{\varrho h} \in V_h$ and $p_{\varrho h}  \in X_h$ 
of the mixed finite element scheme \eqref{Eqn:FEM} by the finite element
isomorphism written as $\mathbf{y}_h \leftrightarrow y_{\varrho h} $ and
$\mathbf{p}_h \leftrightarrow p_{\varrho h}$. When eliminating 
\begin{equation}\label{Eqn:Elimination}
  \mathbf{y}_h = (\mathbf{M}_h + \varrho \mathbf{K}_h)^{-1}
  (\overline{\mathbf{y}}_h - \widetilde{\mathbf{K}}_h^\top \mathbf{p}_h)
\end{equation}
from \eqref{Eqn:SID-System}, we arrive at the dual Schur complement system 
\begin{equation}\label{Eqn:DualSchurComplementSystem}
  \widetilde{\mathbf{K}}_h [\mathbf{M}_h+\varrho \mathbf{K}_h]^{-1}
  \widetilde{\mathbf{K}}_h^\top \mathbf{p}_h =
  \widetilde{\mathbf{K}}_h [\mathbf{M}_h +
  \varrho \mathbf{K}_h]^{-1} \overline{\mathbf{y}}_h
\end{equation}
for defining $\mathbf{p}_h \in \mathbb{R}^N$. Once $\mathbf{p}_h$ is
determined, we can easily compute the optimal finite element state
$\mathbf{y}_h \leftrightarrow y_{\varrho h}$ via \eqref{Eqn:Elimination}, and
the corresponding optimal control $u_{\varrho h} = \gamma_0^{int} y_{\varrho h}$ 
as Dirichlet trace of $y_{\varrho h}$ on $\Gamma$.

First of all, we observe that the matrix
$[\mathbf{M}_h+\varrho \mathbf{K}_h]^{-1}$ in the dual Schur complement is
spectrally equivalent to $[\mathbf{M}_h]^{-1}$ for the choice $\varrho = h^2$
that has proved to be optimal for the discretization error estimates 
in the case of smooth boundaries. This obviously remains true for
$\varrho \le h^2$; i.e., for the choice $\varrho = h^2/|\log(h)|$ made in
Theorem~\ref{Theorem:DiscretizationErrorEstimate} for piecewise smooth
boundaries. More precisely, the spectral equivalence inequalities
\begin{equation}\label{Eqn:SpectralEquivalenceInequalities1}
  \underline{c}_M \, h^d \, \mathbf{I}_h \le
  \frac{1}{d+2} \, \mathbf{D}_h \le \mathbf{M}_h \le
  \mathbf{M}_h+\varrho \mathbf{K}_h 
  \le c \, \mathbf{M}_h \le c \, \mathbf{D}_h \le
  \overline{c}_M \, h^d \, \mathbf{I}_h
\end{equation}
hold for $\varrho \le h^2$, where $\mathbf{I}_h$ denotes the identity matrix,
and $\mathbf{D}_h = \mbox{lump}(\mathbf{M}_h)$ is the lumped mass matrix.
In \eqref{Eqn:SpectralEquivalenceInequalities1}, 
$c = 1 + c_{inv}^2 \ge  1 + \varrho h^{-2} c_{inv}^2$, and $c_{inv}$ is the
constant in the inverse inequality 
$\|v_h\|_{H^1(\Omega)} \le c_{inv} \, h^{-1} \, \|v_h\|_{L^2(\Omega)}$
for all $v_h \in V_h$, $\underline{c}_M$ and $\overline{c}_M$ are positive
constants which are independent on $h$. We refer to
\cite{LangerLoescherSteinbachYang:2024NLA} for a derivation of inequalities
like \eqref{Eqn:SpectralEquivalenceInequalities1}.
From \eqref{Eqn:SpectralEquivalenceInequalities1}, we deduce that 
the dual Schur complement $\widetilde{\mathbf{K}}_h
[\mathbf{M}_h+\varrho \mathbf{K}_h]^{-1} \widetilde{\mathbf{K}}_h^\top$
is spectrally equivalent to 
$\widetilde{\mathbf{K}}_h \mathbf{M}_h^{-1} \widetilde{\mathbf{K}}_h^\top$.
The latter matrix is nothing but the Schur complement matrix arising
from the mixed finite element discretization of the first biharmonic 
boundary value problem that was originally proposed by Ciarlet and
Raviart \cite{CiarleRaviart1974Proceedings}.

Several preconditioners were proposed for the symmetric and positive
definite matrix
$\widetilde{\mathbf{K}}_h \mathbf{M}_h^{-1} \widetilde{\mathbf{K}}_h^\top$
in the literature. In our our numerical experiments, we use the preconditioner 
$\mathbf{C}_h = \mathbf{K}_{0h}^2$, where $\mathbf{K}_{0h} =
(\langle \nabla\varphi_j,  \nabla \varphi_i
\rangle_{L^2(\Omega)})_{i,j=1,\ldots,N}$ is nothing but the Dirichlet stiffness
matrix for the Laplacian. This preconditioner was analyzed in
\cite{BraessPeisker:1986IMAJNA,Langer1986NumerMath}. More precisely, it
was shown that there exist positive, $h$-independent constants
$\underline{c}_0$ and $\overline{c}_0$ such that
\begin{equation}\label{Eqn:SpectralEquivalenceInequalities2} 
  \underline{c}_0 \, h^{-d} \, \mathbf{C}_h
  \le \widetilde{\mathbf{K}}_h \mathbf{M}_h^{-1} \widetilde{\mathbf{K}}_h^\top
  \le \overline{c}_0 h^{-d-1} \, \mathbf{C}_h. 
\end{equation}
The same inequalities with modified constants $\underline{c}_0$ and
$\overline{c}_0$ hold if
$\widetilde{\mathbf{K}}_h \mathbf{M}_h^{-1} \widetilde{\mathbf{K}}_h^\top$
is replaced by the dual Schur complement 
$\widetilde{\mathbf{K}}_h [\mathbf{M}_h+\varrho \mathbf{K}_h]^{-1}
\widetilde{\mathbf{K}}_h^\top$ in which we are interested.

It is clear that we can use the scaled versions 
\begin{equation}\label{Eqn:Preconditioners} 
  \mathbf{C}_h = h^{-d} \, \mathbf{K}_{0h}^2 \quad \mbox{or} \quad
  \mathbf{C}_h = \mathbf{K}_{0h} [\mbox{lump}(\mathbf{M}_{0h})]^{-1}
  \mathbf{K}_{0h}  
\end{equation}
as preconditioners for $\widetilde{\mathbf{K}}_h
[\mathbf{M}_h+\varrho \mathbf{K}_h]^{-1} \widetilde{\mathbf{K}}_h^\top$,
where $\mathbf{M}_{0h} =
(\langle \varphi_j,  \varphi_i \rangle_{L^2(\Omega)})_{i,j=1,\ldots,N}$ 
is the mass matrix which is built from the interior basis functions.
More precisely, there are 
positive, $h$-independent constants
$\underline{c}_1$ and $\overline{c}_1$ such that
\begin{equation}\label{Eqn:SpectralEquivalenceInequalities3} 
  \underline{c}_1  \, \mathbf{C}_h \le
  \widetilde{\mathbf{K}}_h [\mathbf{M}_h+\varrho \mathbf{K}_h]^{-1}
  \widetilde{\mathbf{K}}_h^\top \le
  \overline{c}_1 \, h^{-1} \, \mathbf{C}_h. 
\end{equation}
These spectral inequalities immediately yield that we can solve 
the dual Schur-complement system \eqref{Eqn:DualSchurComplementSystem} 
by means of the PCG with one of the preconditioners $\mathbf{C}_h$
within ${\mathcal{O}}(h^{-1/2} \ln(\varepsilon^{-1}))$ iterations,
where $\varepsilon \in (0,1)$ denotes the prescribed relative accuracy.
In the preconditioning step of every PCG iteration, we have to solve two
systems with the system matrix $\mathbf{K}_{0h}$. In order to perform this
in optimal complexity, special multigrid methods can be used as discussed
in \cite{BraessPeisker:1986IMAJNA}. Furthermore, the application of the 
$\widetilde{\mathbf{K}}_h [\mathbf{M}_h+\varrho \mathbf{K}_h]^{-1}
\widetilde{\mathbf{K}}_h^\top$ to a vector as requested in every PCG
iteration step needs the solution of a system with the system matrix
$\mathbf{M}_h+\varrho \mathbf{K}_h$, but this can efficiently be done by
some inner iteration since $\mathbf{M}_h+\varrho \mathbf{K}_h$
is well-conditioned, cf. \eqref{Eqn:SpectralEquivalenceInequalities1}.
The PCG solver proposed is not of optimal complexity due to the fact that 
the number of iteration grows like
${\mathcal{O}}(h^{-1/2} \ln(\varepsilon^{-1}))$. This is a moderate growth
in comparison with the plane CG; cf. also the numerical results provided
in Section~\ref{Sec:NumericalResults}.
This growth can be avoided if we use more 
sophisticated
preconditioning 
procedures that take care on the right boundary behavior; see
\cite{ArioliLoghin2009SINUM,GlowinskiPironneau:1979SIAMRev,
JohnSteinbach2013Bericht4,Peisker1988M2AN}.


\section{Control and state constraints}\label{Sec:Constraints}
When using a state-based approach for the solution of Dirichlet boundary
control problems, we can incorporate state and control constraints at once.
We now consider the minimization of the reduced cost functional 
\eqref{Eqn:reduced functional} over the convex, bounded and non-empty subset
\begin{eqnarray*}
  K & := & \Big \{ y \in Y : g_- \leq u_\varrho = \gamma_0^{int} y_\varrho
           \leq g_+ \; \mbox{almost everywhere (a.e.) on} \; \Gamma \Big 
           \} \\
    & = & \Big \{ y \in H^1(\Omega) :
          \langle \nabla y , \nabla v \rangle_{L^2(\Omega)} = 0 \;\,
          \forall v \in H^1_0(\Omega),\;
          g_- \leq y \leq g_+ \; \mbox{a.e. in} \; \Omega \Big \},
\end{eqnarray*}
where $g_-, g_+ \in \mathbb{R}$ with $g_- \le 0 \le g_+$. Note that the
last assumption is required in the proof of related regularization error
estimates \cite{GanglLoescherSteinbach:2025}, but this condition can easily
be satisfied by simple additive scaling. The equality of the both sets
follows from the maximum principle for harmonic functions. We note that $K$
involves both equality and inequality constraints. Hence we find the
minimizer $y_\varrho \in K$ of \eqref{Eqn:reduced functional} as unique
solution $(y_\varrho,p_\varrho) \in \overline{K} \times H^1_0(\Omega)$
satisfying
\[
  \begin{array}{lcl}
  \langle y_\varrho - \overline{y}, y - y_\varrho \rangle_{L^2(\Omega)} +
  \varrho \, \langle \nabla y_\varrho , \nabla (y- y_\varrho)
  \rangle_{L^2(\Omega)} +
  \langle \nabla p_\varrho , \nabla (y-y_\varrho) \rangle_{L^2(\Omega)}
  & \geq & 0, \\[2mm]
    \langle \nabla y_\varrho , \nabla q \rangle_{L^2(\Omega)} & = & 0
  \end{array}
\]
for all $(y,q) \in \overline{K} \times H^1_0(\Omega)$, where
$ \overline{K} := \{ y \in H^1(\Omega) : g_- \leq y \leq g_+ \;
\mbox{in} \; \Omega \; \mbox{a.e.} \}$.
As in the unconstrained case we use the finite element space
$X_h = S_h^1(\Omega) \cap H^1_0(\Omega)$, but now we define
\[
  \overline{K}_h := \Big \{ y_h =
  \sum\limits_{k=1}^M y_k \varphi_k \in S_h^1(\Omega) :
  g_- \leq y_k \leq g_+ \; \mbox{for all} \; k=1,\ldots,M \Big \}.
\]
The finite element discretization then results in the 
discrete variational inequality to find
$\underline{y}_\varrho \in {\mathbb{R}}^M \leftrightarrow
y_{\varrho h} \in \overline{K}_h$ such that
\[
  (M_h \underline{y}_\varrho + \varrho K_h \underline{y}_\varrho +
  \widetilde{K}_h^\top \underline{p}_\varrho - \underline{f} ,
  \underline{y} - \underline{y}_\varrho) \geq 0 \quad
  \mbox{for all} \; \underline{y} \in {\mathbb{R}}^M
  \leftrightarrow y_h \in \overline{K}_h,
\]
with the constraint $\widetilde{K}_h \underline{y}_\varrho = 0$.
When introducing the discrete Lagrange multiplier
\[
  \underline{\lambda} :=
  M_h \underline{y}_\varrho + \varrho K_h \underline{y}_\varrho +
  \widetilde{K}_h^\top \underline{p}_\varrho - \underline{f} \in
  {\mathbb{R}}^M,
\]
we conclude a linear system of algebraic equations,
\begin{equation}\label{LGS constraint}
  \left(
    \begin{array}{cc}
      M_h + \varrho K_h & \widetilde{K}_h^\top \\[1mm]
      \widetilde{K}_h &
    \end{array}
  \right)
  \left(
    \begin{array}{c}
      \underline{y}_\varrho \\[1mm]
      \underline{p}_\varrho
    \end{array}
  \right)
  =
  \left(
    \begin{array}{c}
      \underline{f} + \underline{\lambda} \\[1mm]
      \underline{0}
    \end{array}
  \right),
\end{equation}
where the discrete complementarity conditions can be written as,
for $k=1,\ldots,M$ and $c>0$,
\begin{equation}\label{complementarity}
  \lambda_k = \min \{ 0 , \lambda_k + c (g_+-y_{\varrho,k}) \}
  + \max \{ 0, \lambda_k + c (g_--y_{\varrho,k}) \} .
\end{equation}
For the solution of \eqref{LGS constraint} and \eqref{complementarity}
we can apply a semi-smooth Newton method which is equivalent to a
primal-dual active set strategy, see, e.g.,
\cite{Hintermueller2003}, and \cite{GanglLoescherSteinbach:2025}.

%
%

\section{Numerical results}\label{Sec:NumericalResults}
We start our numerical experiments with a smooth harmonic target
$\overline{y}$ defined in two-dimensional (2D) domains $\Omega$ with
different regularity properties. In particular, we want to verify our
theoretical results in the case of domains $\Omega$ with a piecewise
smooth boundary. While the theory for piecewise smooth boundaries was
only presented for 2D domains, we study the 3D case with piecewise smooth
boundary numerically where we consider the unit cube $\Omega = (0,1)^3$
as computational domain.
In Subsection~\ref{Subsec:NumericalResults:HarmonicTarget} we first consider
a harmonic target $\overline{y}$, then a non-harmonic target in
Subsection~\ref{Subsec:NumericalResults:L2OrthogonalSplitting},
and finally, in Subsection \ref{Subsec:NumericalResults:Constraints}, 
we again consider the harmonic target from
Subsection~\ref{Subsec:NumericalResults:HarmonicTarget}, but now  we impose
box constraints on the control $u_\varrho = \gamma_0^{int} y_\varrho$ 
on the boundary $\Gamma$. In all 3D examples, we solve the dual Schur
complement system \eqref{Eqn:DualSchurComplementSystem} by means of the PCG 
(SC-PCG in Tables \ref{tab:solver_dirichlet_bnd_harmonic_target} and
\ref{tab:solver_dirichlet_bnd_L2OrthogonalSplitting_target}) 
with the preconditioner $\widetilde{\mathbf{C}}_h$,
and compare it with the plain CG 
(SC-CG in Tables \ref{tab:solver_dirichlet_bnd_harmonic_target} and \ref{tab:solver_dirichlet_bnd_L2OrthogonalSplitting_target}).
The preconditioner $\widetilde{\mathbf{C}}_h$ is the inexact version of 
the preconditioner $\mathbf{C}_h = \mathbf{K}_{0h}^2$
replacing $\mathbf{K}_{0h}$ by Ruge-St\"uben's AMG preconditioner; see 
\cite{BraessPeisker:1986IMAJNA} for the use of multigrid preconditioners
in squared matrices, 
\cite{JungLangerMeyerQueckSchneider1989MGS3} for multigrid preconditioner
in general, and \cite{RugeStueben1987SIAM}  for Algebraic Multigrid \`{a}
la Ruge and St\"uben. The PCG and CG iterations are stopped as soon as the
Euclidean norm of the residual is reduced by the factor $10^{-8}$.

\subsection{Numerical illustration of the theoretical results in 2D}
\label{Subsec:NumericalResults:DomainRegularity}
Let us consider the smooth and harmonic target 
\begin{equation}\label{Eqn:Examples:harmonic-smooth-2D}
  \overline{y}(x) = x_1^3-3x_1x_2^2 \in \mathcal{C}^\infty (\overline{\Omega}),
\end{equation}
and three different domains: the unit disc
$\Omega_{\text{disc}} :=\{ x \in\mathbb{R}^2:\, |x| <1\}$
as a domain with smooth boundary, the unit square
$\Omega_{\text{quad}} := (0,1)^2$ with a piecewise smooth boundary, and the
L-shaped domain $\Omega_{\text{L}} := (-1,1)^2\setminus
\overline{\Omega}_{\text{quad}}$,
which has a piecewise smooth boundary, but is not convex. The computed errors
of the numerical solutions are depicted in
Figure \ref{Fig:Examples:errors in 2D on different domains}, where we observe
an optimal order of convergence for the smoothly bounded domain
$\Omega_{\text{disc}}$ for $\varrho = h^2$, while we see the logarithmic
dependency on the mesh size for the piecewise smooth domains
$\Omega_{\text{quad}}$ and $\Omega_{\text{L}}$, respectively. We regain
optimal rates when choosing $\varrho = h^2/|\log(h)|$ as predicted by
the theory, see Theorem \ref{Theorem:DiscretizationErrorEstimate}.
Noteworthy, the non-convexity of the domain $\Omega_L$ does not affect
the rate of convergence.

\begin{figure}[htpb!]
  \centering
  \begin{tikzpicture}[scale=0.725,transform shape]
    \begin{axis}[
        xmode = log,
        ymode = log,
        xlabel=$N$,
        ylabel=$\| y_{\varrho h}- \overline{y} \|_{L^2(\Omega)}$,
        legend pos=south west,
        legend style={font=\tiny}]
      
      \addplot [solid, mark=o, color=blue] table [col sep=
        &, y=errL2, x=N]{tab_y1-rhoh2-circle.dat};
      \addlegendentry{$\Omega_{\text{disc}}$, $\varrho = h^2$}
      \addplot [mark=square, color=red] table [col sep=
        &, y=errL2, x=N]{tab_y1-rhoh2-rect.dat};
      \addlegendentry{$\Omega_{\text{quad}}$, $\varrho = h^2$}
      \addplot [mark=text,
      mark options={text mark=\textsf{L}, text mark as node=true, scale=0.8}, color=brown] table [col sep=
        &, y=errL2, x=N]{tab_y1-rhoh2-Lshape.dat};
      \addlegendentry{$\Omega_{\text{L}}$, $\varrho = h^2$}
      
      \addplot[domain = 10^3:10^6, black, samples = 2] {110*1/x};
      \addlegendentry{$h^{2}$}
      \addplot[domain = 10^3:10^6, dashed, black, samples = 2] {7*1/x*(1+1/2*abs(ln(x)))};
      \addlegendentry{$h^{2}(1+|\log(h)|)$}
      \addplot[domain = 10^3:10^6, dashed, black, samples = 2] {30*1/x*(1+1/2*abs(ln(x)))};
    \end{axis}
  \end{tikzpicture}
  \hfill 
  \begin{tikzpicture}[scale=0.725,transform shape]
    \begin{axis}[
        xmode = log,
        ymode = log,
        xlabel=$N$,
        ylabel=$\| y_{\varrho h}- \overline{y} \|_{L^2(\Omega)}$,
        legend pos=south west,
        legend style={font=\tiny}]
      
      \addplot [solid, mark=square*, color=red] table [col sep=
        &, y=errL2, x=N]{tab_y1-rhoh2logh-rect.dat};
      \addlegendentry{$\Omega_{\text{quad}}$, $\varrho = \frac{h^2}{|\log(h)|}$}
      \addplot [solid, mark=text,
      mark options={text mark=\textsf{L}, text mark as node=true, scale=0.8}, color=brown] table [col sep=
        &, y=errL2, x=N]{tab_y1-rhoh2logh-Lshape.dat};
      \addlegendentry{$\Omega_{\text{L}}$, $\varrho = \frac{h^2}{|\log(h)|}$}

      \addplot[domain = 10^3:10^6, black, samples = 2] {10*1/x};
      \addplot[domain = 10^3:10^6, black, samples = 2] {50*1/x};
      \addlegendentry{$h^{2}$}
    \end{axis}
  \end{tikzpicture}
  
  \caption{The error $\norm{y_{\varrho h}-\overline{y}}_{L^2(\Omega)}$ for the
    smooth and harmonic target \eqref{Eqn:Examples:harmonic-smooth-2D} on
    domains with different regularity. 
    Here, the log denotes the natural logarithm.}
  \label{Fig:Examples:errors in 2D on different domains}
\end{figure}
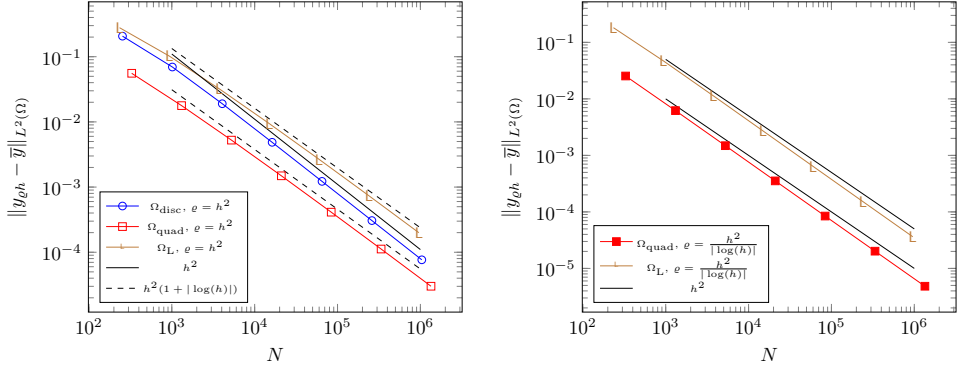

\subsection{Harmonic target in 3D}
\label{Subsec:NumericalResults:HarmonicTarget}
The target is given by the smooth harmonic function
\begin{equation}
  \label{Eqn:HarmonicTarget:Definition}
  \overline{y}(x) = x_1^2- \frac{1}{2} \, x_2^2 - \frac{1}{2} \, x_3^2
  \quad \mbox{for} \; x \in \Omega = (0,1)^3.
\end{equation}
The numerical results are shown in
Table~\ref{tab:solver_dirichlet_bnd_harmonic_target}, where we have chosen
$\varrho = h^2$ with the mesh size $h=2^{-(L+1)}$,
as in the case of a smooth boundary. As in the 2D case
we observe a reduced order of convergence. Concerning the iterative
solution we observe that the preconditioner improves the convergence of
the CG considerably, but the number of iterations still mildly depends on
$h$ as predicted in Section \ref{Sec:Solver}.

\begin{table}[ht]
  {\small
      \begin{center}
    \begin{tabular}{llcccc}
      \hline
      $L$ & $M$ & $\|\overline{y} - y_{\varrho h}\|_{L^2(\Omega)}$
      & eoc & \#CG Its & \#PCG Its\\
      \hline
      $1$&$125$&$1.61$e$-1$& $-$  &$16$  & $13$\\
      $2$&$729$&$6.91$e$-2$& $1.22$  &$43$  & $24$\\
      $3$&$4,913$&$2.50$e$-2$& $1.47$ & $115$ & $33$\\
      $4$&$35,937$&$8.18$e$-3$& $1.61$ & $333$ &$46$ \\
      $5$&$274,625$&$2.50$e$-3$&$1.71$ &$1159$ &$60$ \\
      $6$&$2,146,689$&$7.29$e$-4$&$1.78$ & $4000$ &$86$\\
      $7$&$16,974,593$&$2.06$e$-4$&$1.82$ & $-$ &$133$\\
      \hline
    \end{tabular}
    \end{center}
    \caption{Harmonic target \eqref{Eqn:HarmonicTarget:Definition}:
    error and experimental order of convergence (eoc), number
    of CG and PCG iterations for Schur complement system.} 
    \label{tab:solver_dirichlet_bnd_harmonic_target}
  }
\end{table}
\subsection{Non-harmonic target in 3D}
\label{Subsec:NumericalResults:L2OrthogonalSplitting}
The target is now given by the non-harmonic function
\begin{equation}
  \label{Eqn:L2OrthogonalSplitting:Definition}
  \overline{y}(x) = \underbrace{x_1^2-\frac{1}{2} \, x_2^2-
    \frac{1}{2} \, x_3^2}_{=: \, \overline{y}_1(x)}+
  \underbrace{
    \Delta\prod_{i=1}^3\left(
    x_i^2(1-x_i)^2
    \right)}_{=: \, \overline{y}_2(x)} \quad \mbox{for}
    \; x  \in \Omega = (0,1)^3,
\end{equation}
that is nothing but an $L^2$ orthogonal direct sum of the harmonic part 
$\overline{y}_1 \in \mathcal{H}(\Omega)$, already considered in 
Subsection~\ref{Subsec:NumericalResults:HarmonicTarget}, 
and the non-harmonic part $ \overline{y}_2 \in \Delta (H^2_0(\Omega))$.
The numerical results are shown in Table~\ref{tab:solver_dirichlet_bnd_L2OrthogonalSplitting_target}.
We again choose $\varrho = h^2$ as in the case of a smooth boundary. 
We observe that the computed finite element state $y_{\varrho h}$ converges to 
the harmonic part $\overline{y}_1$ with the same $L^2$ rates as
observed in Subsection~\ref{Subsec:NumericalResults:HarmonicTarget}; 
cf. with Table~\ref{tab:solver_dirichlet_bnd_harmonic_target}.
The iteration numbers of the SC-CG and SC-PCG behave similarly as for the 
harmonic target.

\begin{table}[ht]
  {\small
      \begin{center}
    \begin{tabular}{llccccc}
      \hline
      $L$ & $M$ & $\|\overline{y} - y_{\varrho h}\|_{L^2(\Omega)}$
      &$\|\overline{y}_1 - y_{\varrho h}\|_{L^2(\Omega)}$
      & eoc &\#CG Its & \#PCG Its\\
      \hline
      $1$&$125$&$1.61$e$-1$ & $1.61$e$-1$ & $-$ &$16$  & $13$\\
      $2$&$729$&$6.92$e$-2$&$6.91$e$-2$& $1.22$  &$43$  & $24$\\
      $3$&$4,913$&$2.52$e$-2$&$2.50$e$-2$ & $1.47$ & $115$ & $33$\\
      $4$&$35,937$&$8.74$e$-3$& $8.18$e$-3$ & $1.61$ & $336$ &$46$ \\
      $5$&$274,625$&$3.97$e$-3$&$2.50$e$-3$ & $1.71$  &$1169$ &$63$ \\
      $6$&$2,146,689$&$3.17$e$-3$ & $7.29$e$-4$ & $1.78$ & $3997$ &$92$\\
      $7$&$16,974,593$&$3.09$e$-3$ & $2.06$e$-4$ & $1.82$ & $-$ &$141$\\
      \hline
    \end{tabular}
    \end{center}
    \caption{$L^2$ orthogonal splitting of the target
      \eqref{Eqn:L2OrthogonalSplitting:Definition}:  
      error of the state $y_{\varrho h}$ with respect to the target
      $\overline{y}$, and its harmonic part $\overline{y}_1$,
    and number
    of CG and PCG iterations for Schur complement system.}
    \label{tab:solver_dirichlet_bnd_L2OrthogonalSplitting_target}
    }
\end{table}

\subsection{Constraints}
\label{Subsec:NumericalResults:Constraints}
We again consider the harmonic target \eqref{Eqn:HarmonicTarget:Definition},
but now we impose box constraints on the control
$u_\varrho = \gamma_0^{int} y_\varrho$ on the boundary $\Gamma$ 
with the lower bound $g_- = -0.7$ and the upper bound $g_+ = + 0.7$;
cf. Section~\ref{Sec:Constraints}.
In Table~\ref{tab:PDAS_Control_Constraints}, we provide the numerical results 
obtained by  the  primal-dual active set (PDAS) method 
for solving \eqref{LGS constraint} and \eqref{complementarity}.
We observe that the $L^2$
distance  $\|\overline{y} - y_{\varrho h}\|_{L^2(\Omega)}$  of the computed
finite element state $y_{\varrho h}$ to the target $\overline{y}$
stagnates as soon as the projection of the target to $K$ is reached, 
where we choose $\varrho = h^2$. The PDAS method stops the iterations
when the points in the active and inactive sets 
do not change anymore. In Figure \ref{fig:PDAS_Control_Constraints},
we visualize the  boundary control on three boundary parts
of the unit cube $\Omega = (0,1)^3$, and
near both the upper and lower bounds of constraints.

\begin{table}[ht]
  {\small
    \begin{center}
      \begin{tabular}{llccl}
      \hline
        $L$ & $M$ & $\|\overline{y} - y_{\varrho h}\|_{L^2(\Omega)}$
        &  \#PDAS & \#Changing points\\
        \hline
        $1$&$125$&$1.61$e$-1$  &$1$  & $\{0\}$\\
        $2$&$729$&$7.15$e$-2$ &$2$  & $\{21;\, 0\}$\\
        $3$&$4,913$&$4.06$e$-2$ & $3$ & $\{200;\, 1;\, 0\}$\\
        $4$&$35,937$&$3.48$e$-2$ & $4$ &$\{923;\, 113;\, 1;\, 0\}$ \\
        $5$&$274,625$&$3.48$e$-2$&$4$ &$\{3,847;\, 741;\, 64;\, 0\}$ \\
        $6$&$2,146,689$&$3.40$e$-2$ & $5$ &$\{15,503;\, 3,097;\, 985;\, 22;\, 0\}$\\
        \hline
      \end{tabular}
    \end{center}
    \caption{Box constraints on the Dirichlet data of the harmonic
      target \eqref{Eqn:HarmonicTarget:Definition}: error, number of
      PDAS iterations, and number of changing points per iteration.}
    \label{tab:PDAS_Control_Constraints}
    }
\end{table}

\begin{figure}[ht]
  \centering
  \includegraphics[width=0.3\textwidth]{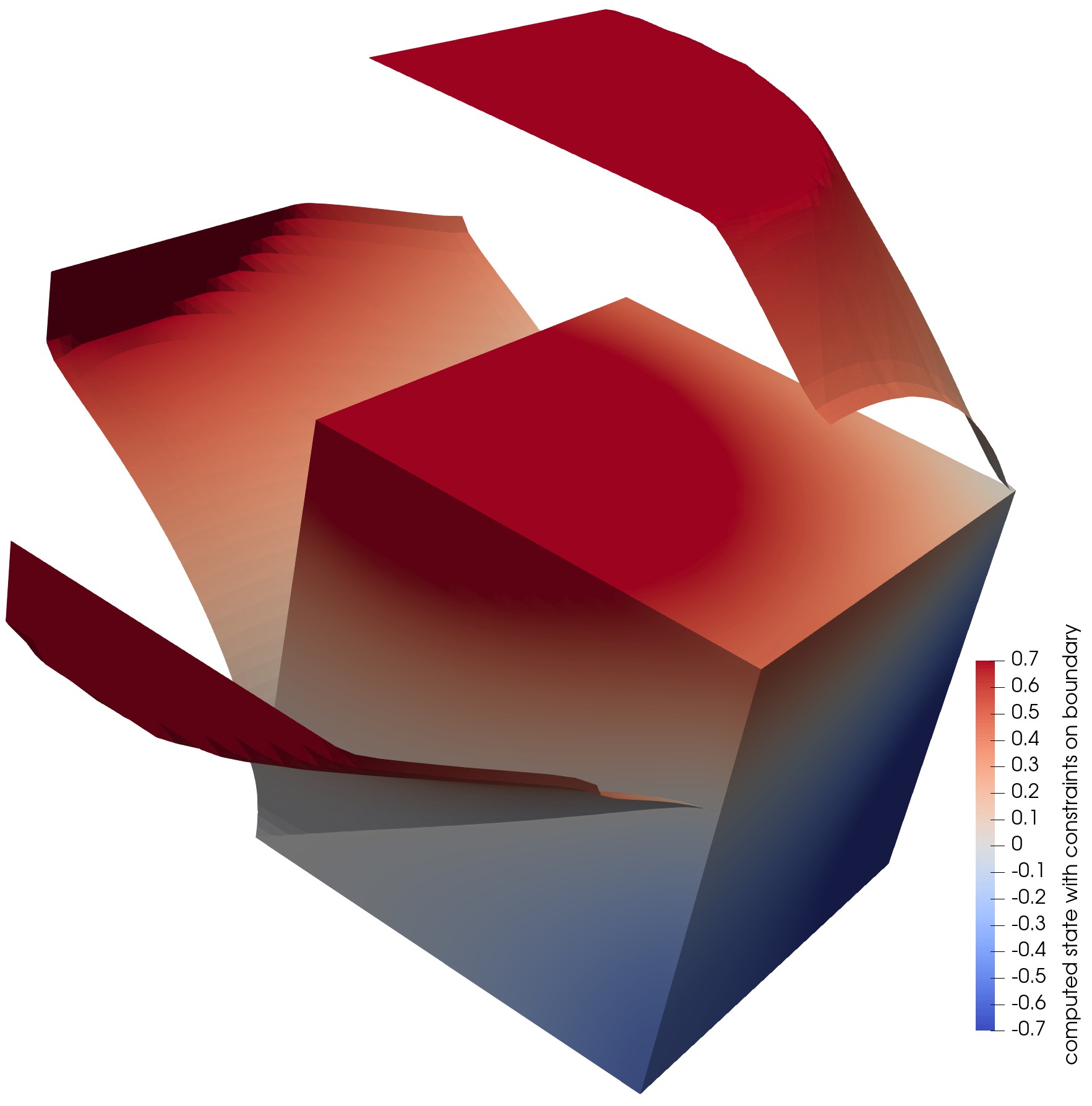}
  \includegraphics[width=0.3\textwidth]{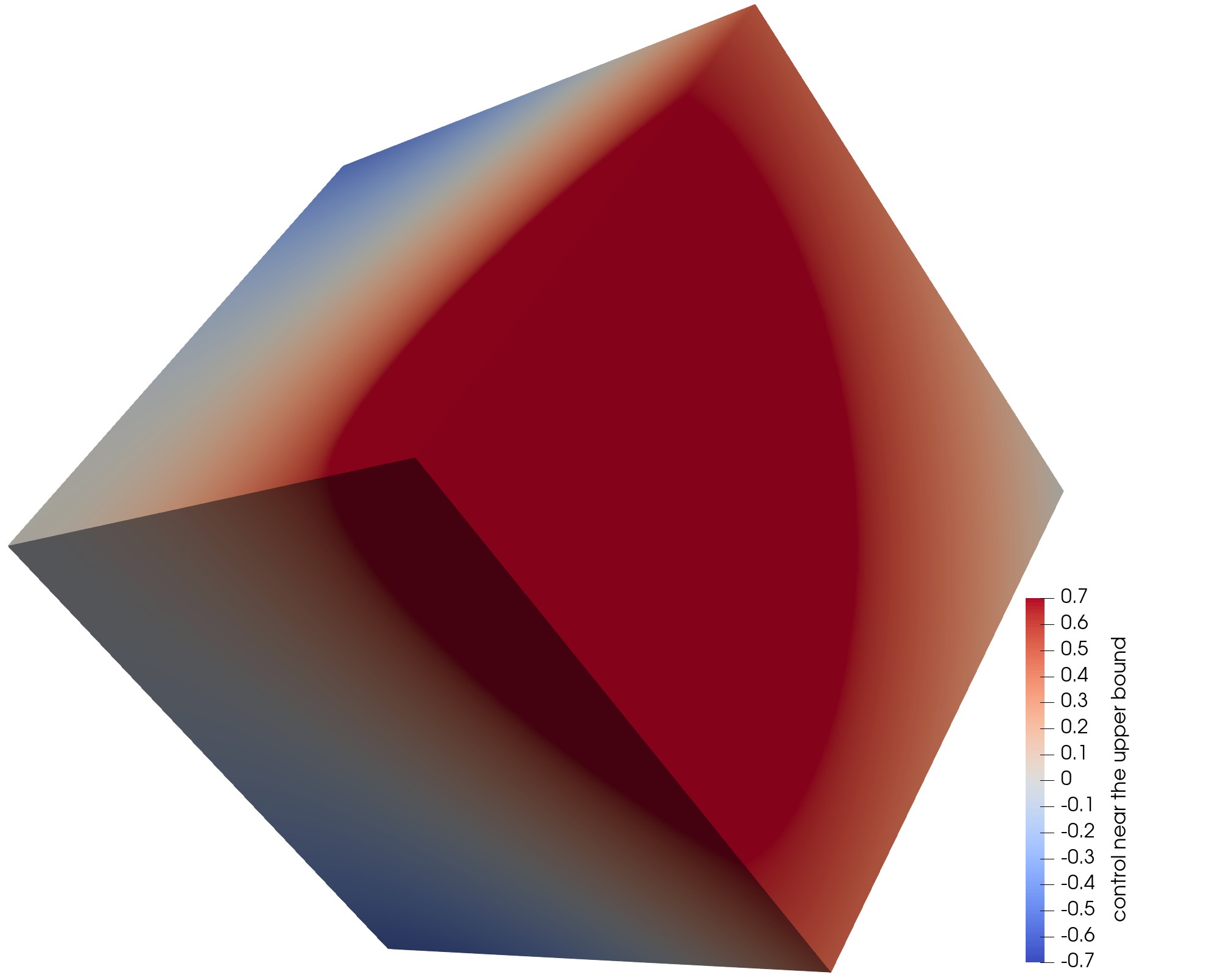}
  \includegraphics[width=0.3\textwidth]{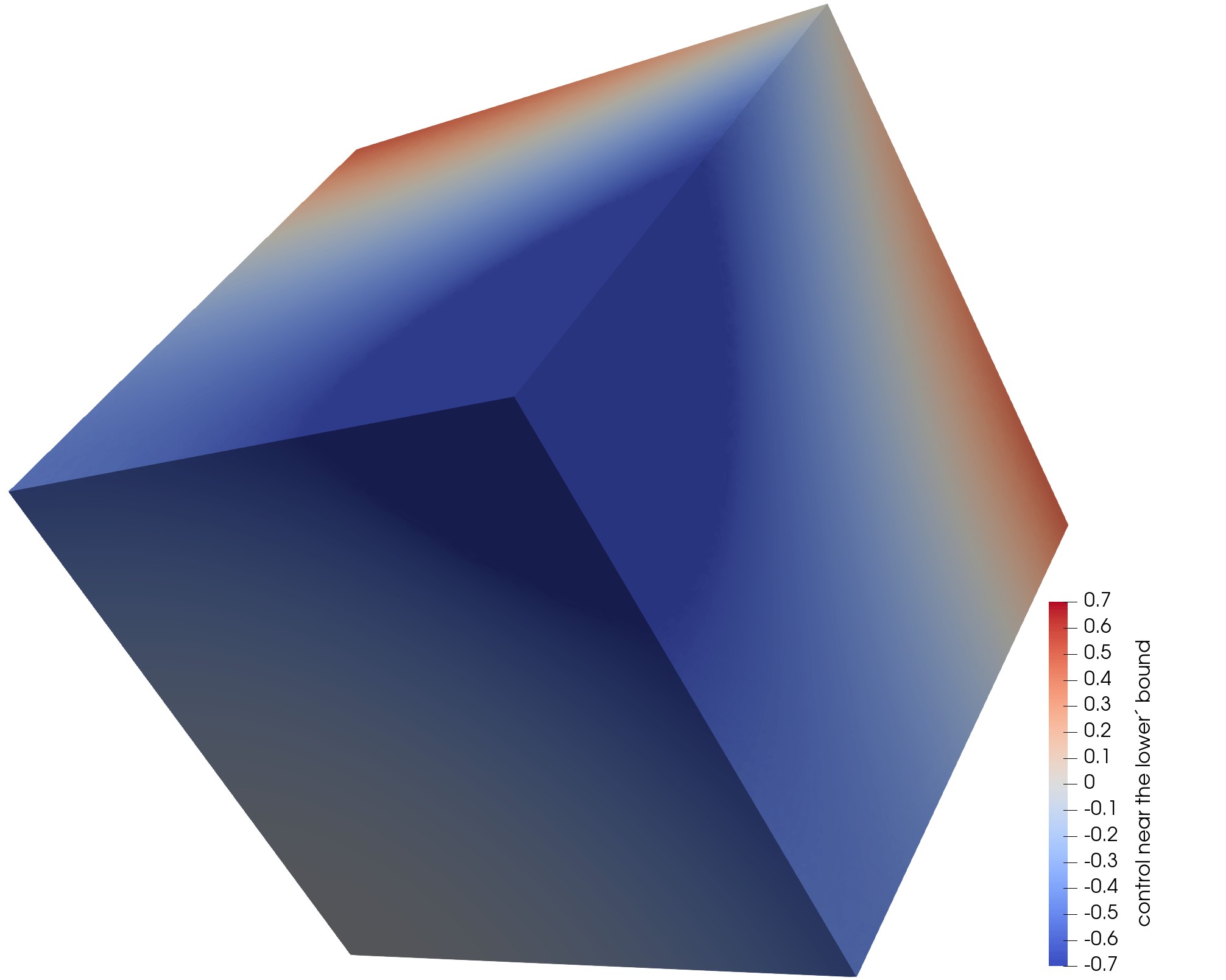}
  \caption{Visualization of the boundary control on three boundary parts
    (left), and near the upper (middle) and lower (right) bounds at level $6$.}
  \label{fig:PDAS_Control_Constraints}
\end{figure}

%
%

\section{Conclusions and outlook}\label{Sec:ConclusionAndOutlook}
We have presented a new approach to the numerical solution of optimal
control problems for the Laplace equation where the Dirichlet boundary data
serves as control. We have used energy regularization in $H^{1/2}(\Gamma)$
that is natural for Dirichlet data when considering the standard
variational formulation of elliptic second order partial differential
equations. We have reduced the optimality conditions to a variational
equality for computing the state in the space of harmonic functions. For a
practical realization, we have introduced a Lagrange multiplier leading to
a mixed variational formulation in $H^1(\Omega) \times H^1_0(\Omega)$ 
that can easily be discretized by standard $C^0$ conforming finite elements.
The analysis of the error in terms of the cost parameter $\varrho$ and the
finite element mesh size $h$ implies that both parameters should appropriately
be balanced in order to obtain asymptotically optimal convergence rates. It
turns out that $\varrho = h^2$ in the case of 2D and 3D domains $\Omega$
with smooth boundaries $\Gamma$. If the boundary $\Gamma$ is only piecewise
smooth, then the choice $\varrho = h^2$ is only suboptimal
for smooth targets as it was shown
theoretically in 2D, and numerically in 3D. Indeed, in 2D, it turns out
that $\varrho = h^2 / |\log(h)|$ is the optimal choice. Furthermore, we have
proposed efficient solvers for the resulting system of finite element
equations, and we have shown that it is easy to incorporate box constraints 
on the control and the state at once. 
 
The precise analysis of smooth targets in
3D domains $\Omega$ with piecewise smooth boundaries, 
and the construction and implementation of asymptotically optimal solvers 
are interesting future research topics. It is clear that the Laplace 
operator plays the role of a model operator that can be replaced 
by other second-order elliptic partial differential operators.
These and other optimal control problems fit into the abstract 
framework recently presented in
\cite{LangerLoescherSteinbachYang2025arXiv2505.19062}.
We only mention here Neumann \cite{BrennerSung2025RAM}
and Robin boundary control,
boundary and initial data control of parabolic initial-boundary value
problems, partial distributed control, and targets given on a subset
or even only on the boundary of the computational domain.
In terms of inverse problems, the latter is also called 
partial or limited observation \cite{MardalNielsenNordaas2017BIT}.

%
%

\section*{Acknowledgments}
We would like to thank the computing resource support of
the high performance computing cluster
Radon1\footnote{https://www.oeaw.ac.at/ricam/hpc} from Johann Radon Institute
for Computational and Applied Mathematics (RICAM). The financial support for the fourth author by the
Austrian Federal Ministry for Digital and Economic Affairs, the National
Foundation for Research, Technology and Development, and the Christian
Doppler Research Association is gratefully acknowledged.


\bibliography{LLSY2025DOCP}
\bibliographystyle{abbrv} 
  

\end{document}